\newcommand{\Ap}[1][]{A_p\, #1}
\newcommand{\abs}[1]{\left|#1\right|}
\newcommand{\Bp}[1][]{B_p\, #1}
\newcommand{\bdry}[1]{\partial #1}
\newcommand{\A}{{\cal A}}
\newcommand{\F}{{\cal F}}
\newcommand{\closure}[1]{\overline{#1}}
\newcommand{\comp}{\circ}
\newcommand{\dint}{\ds{\int}}
\newcommand{\dist}[2]{\text{dist}\, (#1,#2)}
\newcommand{\dnorm}[2][]{\left\|#2\right\|_{#1}^\ast}
\newcommand{\ds}[1]{\displaystyle #1}
\newcommand{\dualp}[3][]{\left(#2,#3\right)_{#1}}
\newcommand{\eps}{\varepsilon}
\newcommand{\half}{\frac{1}{2}}
\newcommand{\incl}{\hookrightarrow}
\newcommand{\isom}{\approx}
\newcommand{\loc}{\text{loc}}
\newcommand{\M}{{\cal M}}
\newcommand{\N}{\mathbb N}
\newcommand{\norm}[2][]{\left\|#2\right\|_{#1}}
\renewcommand{\o}{\text{o}}
\newcommand{\PS}[1]{$(\text{PS})_{#1}$}
\newcommand{\pnorm}[2][]{\if #1'' \left|#2\right|_p \else \left|#2\right|_{#1} \fi}
\newcommand{\QED}{\mbox{\qedhere}}
\newcommand{\R}{\mathbb R}
\newcommand{\RP}{\R \text{P}}
\newcommand{\restr}[2]{\left.#1\right|_{#2}}
\newcommand{\seq}[1]{\left(#1\right)}
\newcommand{\set}[1]{\left\{#1\right\}}
\newcommand{\wto}{\rightharpoonup}
\newcommand{\Z}{\mathbb Z}
\DeclareMathOperator{\divg}{div}
\DeclareMathOperator{\im}{im}
\newenvironment{enumroman}{\begin{enumerate}

}{\end{enumerate}}
\newenvironment{properties}[1]{\begin{enumerate}

}{\end{enumerate}}
\newtheorem{lemma}{Lemma}[section]
\newtheorem{proposition}[lemma]{Proposition}
\newtheorem{theorem}[lemma]{Theorem}
\theoremstyle{definition}
\newtheorem{definition}[lemma]{Definition}
\theoremstyle{remark}
\newtheorem{example}[lemma]{Example}
\newtheorem{remark}[lemma]{Remark}
\numberwithin{equation}{section}
\title{\bf An abstract critical point theorem with applications to elliptic problems with combined nonlinearities\thanks{{\em MSC2010:} Primary 58E05, Secondary 35J92, 35R11
\newline \indent\; {\em Key Words and Phrases:} Abstract critical point theory, pairs of critical points, critical groups, cohomological index, local and nonlocal problems, combined nonlinearities, pairs of nontrivial solutions}}
\author{\bf Kanishka Perera\\
Department of Mathematical Sciences\\
Florida Institute of Technology\\
Melbourne, FL 32901, USA\\
\em kperera@fit.edu}
\date{}
\begin{document}

\maketitle

\begin{abstract}
We prove an abstract critical point theorem based on a cohomological index theory that produces pairs of nontrivial critical points with nontrivial higher critical groups. This theorem yields pairs of nontrivial solutions that are neither local minimizers nor of mountain pass type for problems with combined nonlinearities. Applications are given to subcritical and critical $p$-Laplacian problems, Kirchhoff type nonlocal problems, and critical fractional $p$-Laplacian problems.
\end{abstract}

\section{Introduction and statement of results}

The purpose of this paper is to prove an abstract critical point theorem that can be used to obtain pairs of nontrivial solutions of problems of the type
\begin{equation} \label{3}
\left\{\begin{aligned}
- \Delta_p\, u & = \lambda\, |u|^{p-2}\, u + \mu f(x,u) + |u|^{q-2}\, u && \text{in } \Omega\\[10pt]
u & = 0 && \text{on } \bdry{\Omega},
\end{aligned}\right.
\end{equation}
where $\Omega$ is a bounded domain in $\R^N,\, N \ge 1$, $p > 1$, $\Delta_p\, u = \divg (|\nabla u|^{p-2}\, \nabla u)$ is the $p$-Laplacian of $u$, $p < q \le p^\ast = Np/(N - p)$ if $p < N$ and $p < q < \infty$ if $p \ge N$, $\lambda, \mu > 0$ are parameters, and $f$ is a Carath\'{e}odory function on $\Omega \times \R$ satisfying
\begin{equation} \label{18}
f(x,t) = |t|^{\sigma - 2}\, t + \o(|t|^{\sigma - 1}) \quad \text{as } t \to 0, \text{uniformly a.e.\! in } \Omega
\end{equation}
for some $1 < \sigma < p$, the sign condition
\begin{equation} \label{30}
F(x,t) = \int_0^t f(x,s)\, ds > 0 \quad \text{for a.a.\! } x \in \Omega \text{ and all } t \in \R \setminus \set{0},
\end{equation}
and the growth condition
\begin{equation} \label{27}
|f(x,t)| \le a\, (|t|^{r-1} + 1) \quad \text{for a.a.\! } x \in \Omega \text{ and all } t \in \R
\end{equation}
for some $a > 0$ and $p < r < q$. Denoting by $\lambda_1 > 0$ the first Dirichlet eigenvalue of $- \Delta_p$ on $\Omega$, the case where $0 < \lambda < \lambda_1$ and $\mu > 0$ is sufficiently small can be handled using a local minimization argument and the mountain pass theorem (see Ambrosetti et al.\! \cite{MR1276168}, Garc\'{\i}a Azorero et al.\! \cite{MR1776988}, Kyritsi and Papageorgiou \cite{MR2476670}, Papageorgiou and R\u{a}dulescu \cite{MR3367938}, and Furtado et al.\! \cite{MR3518343}). However, when $\lambda \ge \lambda_1$, the associated variational functional
\[
E(u) = \frac{1}{p} \int_\Omega |\nabla u|^p\, dx - \frac{\lambda}{p} \int_\Omega |u|^p\, dx - \mu \int_\Omega F(x,u)\, dx - \frac{1}{q} \int_\Omega |u|^q\, dx, \quad u \in W^{1,\,p}_0(\Omega)
\]
no longer has the mountain pass geometry and no multiplicity results are available in the literature. We will prove a linking theorem based on a cohomological index theory that can capture the geometry of this functional to produce a pair of nontrivial critical points for all $\lambda > 0$ and sufficiently small $\mu > 0$. These critical points are neither local minimizers nor of mountain pass type in general. They are higher critical points in the sense that they each have a nontrivial higher critical group (see Definition \ref{Definition 2}).

To state our main result, let $W$ be a Banach space and let $\M$ be a bounded symmetric subset of $W \setminus \set{0}$ radially homeomorphic to the unit sphere $S = \set{u \in W : \norm{u} = 1}$, i.e., the restriction to $\M$ of the radial projection $\pi : W \setminus \set{0} \to S,\, u \mapsto u/\norm{u}$ is a homeomorphism. Then the radial projection from $W \setminus \set{0}$ onto $\M$ is given by
\[
\pi_\M = (\restr{\pi}{\M})^{-1} \comp \pi.
\]
For a symmetric set $A \subset W \setminus \set{0}$, denote by $i(A)$ its $\Z_2$-cohomological index (see Definition \ref{Definition 1}). We have the following theorem.

\begin{theorem} \label{Theorem 1}
Let $E$ be a $C^1$-functional on $W$ and let $A_0$ and $B_0$ be disjoint closed symmetric subsets of $\M$ such that
\[
i(A_0) = i(\M \setminus B_0) = k < \infty.
\]
Assume that there exist $w_0 \in \M \setminus A_0$, $0 \le r < \rho < R$, and $a < b$ such that, setting
\begin{gather}
A_1 = \set{\pi_\M((1 - s)\, v + sw_0) : v \in A_0,\, 0 \le s \le 1}, \label{14}\\[10pt]
A^\ast = \set{tu : u \in A_1,\, r \le t \le R}, \notag\\[10pt]
B^\ast = \set{tw : w \in B_0,\, 0 \le t \le \rho}, \notag\\[10pt]
A = \set{ru : u \in A_1} \cup \set{tv : v \in A_0,\, r \le t \le R} \cup \set{Ru : u \in A_1}, \label{15}\\[10pt]
B = \set{\rho w : w \in B_0}, \label{16}
\end{gather}
we have
\begin{equation} \label{2}
a < \inf_{B^\ast}\, E, \qquad \sup_A\, E < \inf_B\, E, \qquad \sup_{A^\ast}\, E < b.
\end{equation}
If $E$ satisfies the {\em \PS{c}} condition for all $c \in (a,b)$, then $E$ has a pair of critical points $u_1, u_2$ with
\[
\inf_{B^\ast}\, E \le E(u_1) \le \sup_A\, E, \qquad \inf_B\, E \le E(u_2) \le \sup_{A^\ast}\, E.
\]
If, in addition, $E$ has only a finite number of critical points with the corresponding critical values in $(a,b)$, then $u_1$ and $u_2$ can be chosen to satisfy
\[
C^k(E,u_1) \ne 0, \qquad C^{k+1}(E,u_2) \ne 0.
\]
\end{theorem}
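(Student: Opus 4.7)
The plan is to produce the critical points $u_1, u_2$ as extremals of two minimax values matching the stated bounds, and then to extract the higher critical group information from a cohomological reading of the linking structure. The geometric shape of $(A, B, A^\ast, B^\ast)$ is the usual ``boundary-of-disk links sphere'' picture done radially in $\M$, with the twist that the apex $w_0$ introduced in \eqref{14} is not symmetric, so its antipodal pair $\set{\pm w_0}$ enlarges the $\Z_2$-cohomological index of $A_0$ by exactly one; this one-step gap is what will eventually separate the two critical groups by one degree.

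I would take $c_2$ to be the standard cohomological linking minimax
\[
c_2 = \inf_{\gamma \in \Gamma_2} \sup_{u \in A^\ast} E(\gamma(u)), \qquad \Gamma_2 = \set{\gamma \in C(A^\ast, W) : \restr{\gamma}{A} = \text{id}},
\]
and $c_1$ to be a dual ``lower'' minimax built so that $\gamma = \text{id}$ witnesses $c_1 \ge \inf_{B^\ast} E$ while the linking of $A$ with $B$ forces $c_1 \le \sup_A E$ (for instance a $\sup_\gamma \inf$ formulation over deformations of $B^\ast$ constrained to stay away from $A$ in a controlled sense). The central linking lemma is then proved by pushing the problem down to $\M$ via the radial projection $\pi_\M$: under $i(A_0) = i(\M \setminus B_0) = k$ together with the explicit formulas \eqref{14}--\eqref{16}, the monotonicity, continuity, subadditivity and dimension axioms of $i$ imply that every admissible deformation of $A^\ast$ meets $B$ and every admissible deformation of $B^\ast$ meets $A$. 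The resulting inequalities $\inf_{B^\ast} E \le c_1 \le \sup_A E < \inf_B E \le c_2 \le \sup_{A^\ast} E$ place both $c_1, c_2$ in $(a,b)$, and the \PS{c} condition combined with the quantitative deformation lemma then forces each $c_i$ to be a critical value, providing critical points $u_1, u_2$ with the required level bounds.

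For the critical groups, assume finitely many critical points with values in $(a,b)$. The cohomological linking should upgrade to nonvanishing of relative cohomology of the sublevel pairs isolating $c_1$ and $c_2$, in degrees read off from $i(A_0) = k$ and $i(\tilde A_1) = k+1$, where $\tilde A_1 = A_1 \cup (-A_1)$ is the symmetric closure of $A_1$ --- a suspension of $A_0$ by $\set{\pm w_0}$, and hence of index $k+1$. Under PS and finiteness, the relative cohomology of a sublevel pair decomposes as the direct sum of the critical groups of the enclosed critical points, so the nonvanishing classes must be realized by critical points; the level inequalities then force the degree-$k$ class to live at $u_1$ and the degree-$(k+1)$ class at $u_2$. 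The hard part will be this last cohomological accounting: turning the set-theoretic linking into relative cohomology classes in exactly the predicted degrees, and using the long exact sequences of the appropriate triples of sublevel sets to pin down which critical level supports which class. The linking lemma itself and the passage from minimax to critical value under \PS{c} are routine once the index setup is fixed.
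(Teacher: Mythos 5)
Your proposal has the right overall outline---a cohomological linking between $A,B$ together with the $(a,b)$ windows should force nonvanishing of relative cohomology of sublevel pairs in degrees $k$ and $k+1$, and under finiteness this localizes to critical groups---but the route you sketch diverges from the paper's and has gaps at precisely the places you flag as ``the hard part.''

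First, the minimax detour is unnecessary and makes the critical group accounting harder, not easier. The paper never introduces minimax values $c_1, c_2$: it fixes regular levels $\alpha < \beta < \gamma$ in $(a,b)$ sandwiching the geometric quantities, and proves directly that $H^k(E^\beta,E^\alpha) \ne 0$ and $H^{k+1}(E^\gamma,E^\beta) \ne 0$. It does this by first establishing (Theorem~\ref{Theorem 3}) that $A$ links $B$ cohomologically in dimension $k$ (not $k-1$), so the inclusion-induced map $\iota^\ast : \widetilde{H}^k(W\setminus B)\to\widetilde{H}^k(A)$ is nontrivial; factoring $\iota^\ast=\iota_1^\ast\iota_2^\ast$ through $\widetilde{H}^k(E^\beta)$, the contractibility of $W\setminus B^\ast$ (via a radial strong deformation retraction onto a sphere, using $\dim W = \infty$) forces $i^\ast:\widetilde{H}^k(E^\beta)\to\widetilde{H}^k(E^\alpha)$ to have nontrivial kernel, giving the degree-$k$ class; and the contractibility of $A^\ast$ forces $i^\ast:\widetilde{H}^k(E^\gamma)\to\widetilde{H}^k(E^\beta)$ to fail to be surjective, giving the degree-$(k+1)$ class via the coboundary in the long exact sequence of $(E^\gamma,E^\beta)$. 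Nothing in the argument passes through an admissible-class definition of a minimax level, and the degree shift $k \to k+1$ is produced by that coboundary map, not by any index count.

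Second, your heuristic that $\widetilde{A}_1 = A_1 \cup (-A_1)$ is a suspension of $A_0$ and hence has index $k+1$ is not established and is not what the paper uses. The map $(v,s)\mapsto\pi_\M((1-s)v+sw_0)$ need not be injective on $A_0\times[0,1]$, and $A_1$ and $-A_1$ can intersect in more than $A_0$, so $\widetilde{A}_1$ is not in general a suspension. The relevant contractibility facts the paper needs are of $A_1$ (contraction along the parameter $s$ to $w_0$), of $A_2$ and $A^\ast$ (radial retraction), and of $A_4$ (homeomorphic to $A_1$); the index of $\widetilde{A}_1$ plays no role.

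Third, proving the cohomological linking ``by pushing to $\M$ and using the monotonicity/continuity/subadditivity/dimension axioms of $i$'' is not sufficient. The index axioms give set-theoretic intersection (a piercing-type conclusion), which is fine for showing minimax values are critical, but the critical group statement needs a genuinely cohomological conclusion: a nontrivial $\iota^\ast$ in the right degree. The paper gets this from the known fact (Example~\ref{Example 1}) that $\{Rv: v\in A_0\}$ links $\{tw : w\in B_0, t\ge 0\}$ cohomologically in dimension $k-1$, and then three diagram chases using excision and long exact sequences to push that up to the linking of $A$ with $B$ in dimension $k$. Finally, your appeal to a direct-sum decomposition of relative cohomology into critical groups is stronger than what is used or needed; the weaker fact that $H^q(E^b,E^a)\ne 0$ with finitely many critical points in $(a,b)$ yields one with $C^q\ne 0$ (Perera--Agarwal--O'Regan, Prop.~3.13) is exactly what the paper invokes and avoids any Morse-decomposition hypotheses.
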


We will prove this theorem in Section \ref{Section 2}. The proof is based on the notion of a cohomological linking. In the course of the proof, we will also establish a new linking result of independent interest (see Theorem \ref{Theorem 3}).

Next we prove a multiplicity result for a class of abstract operator equations that includes problem \eqref{3} as a special case. Let $(W,\norm{\, \cdot\,})$ be a uniformly convex Banach space with dual $(W^\ast,\dnorm{\, \cdot\,})$ and duality pairing $\dualp{\cdot}{\cdot}$. Recall that $h \in C(W,W^\ast)$ is a potential operator if there is a functional $H \in C^1(W,\R)$, called a potential for $h$, such that $H' = h$. We consider the nonlinear operator equation
\begin{equation} \label{4}
\Ap[u] = \lambda \Bp[u] + \mu f(u) + g(u)
\end{equation}
in $W^\ast$, where $\Ap, \Bp, f, g \in C(W,W^\ast)$ are potential operators satisfying the following assumptions, and $\lambda, \mu > 0$ are parameters:
\begin{enumerate}
\item[$(A_1)$] $\Ap$ is $(p - 1)$-homogeneous and odd for some $p \in (1,\infty)$: $\Ap[(tu)] = |t|^{p-2}\, t\, \Ap[u]$ for all $u \in W$ and $t \in \R$,
\item[$(A_2)$] $\dualp{\Ap[u]}{v} \le \norm{u}^{p-1} \norm{v}$ for all $u, v \in W$, and equality holds if and only if $\alpha u = \beta v$ for some $\alpha, \beta \ge 0$, not both zero (in particular, $\dualp{\Ap[u]}{u} = \norm{u}^p$ for all $u \in W$),
\item[$(B_1)$] $\Bp$ is $(p - 1)$-homogeneous and odd: $\Bp[(tu)] = |t|^{p-2}\, t\, \Bp[u]$ for all $u \in W$ and $t \in \R$,
\item[$(B_2)$] $\dualp{\Bp[u]}{u} > 0$ for all $u \in W \setminus \set{0}$, and $\dualp{\Bp[u]}{v} \le \dualp{\Bp[u]}{u}^{(p-1)/p} \dualp{\Bp[v]}{v}^{1/p}$ for all $u, v \in W$,
\item[$(B_3)$] $\Bp$ is a compact operator,
\item[$(F_1)$] the potential $F$ of $f$ with $F(0) = 0$ satisfies $\ds{\lim_{t \to 0}}\, \dfrac{F(tu)}{|t|^p} = + \infty$ uniformly on compact subsets of $W \setminus \set{0}$,
\item[$(F_2)$] $F(u) > 0$ for all $u \in W \setminus \set{0}$,
\item[$(F_3)$] $F$ is bounded on bounded subsets of $W$,
\item[$(G_1)$] the potential $G$ of $g$ with $G(0) = 0$ satisfies $G(u) = \o(\norm{u}^p)$ as $u \to 0$,
\item[$(G_2)$] $G(u) > 0$ for all $u \in W \setminus \set{0}$,
\item[$(G_3)$] $G$ is bounded on bounded subsets of $W$,
\item[$(G_4)$] $\ds{\lim_{t \to + \infty}} \dfrac{G(tu)}{t^p} = + \infty$ uniformly on compact subsets of $W \setminus \set{0}$.
\end{enumerate}

Solutions of equation \eqref{4} coincide with critical points of the $C^1$-functional
\begin{equation} \label{22}
E(u) = I_p(u) - \lambda J_p(u) - \mu F(u) - G(u), \quad u \in W,
\end{equation}
where
\[
I_p(u) = \frac{1}{p} \dualp{\Ap[u]}{u} = \frac{1}{p} \norm{u}^p, \qquad J_p(u) = \frac{1}{p} \dualp{\Bp[u]}{u}
\]
are the potentials of $\Ap$ and $\Bp$ satisfying $I_p(0) = 0 = J_p(0)$, respectively (see Proposition \ref{Proposition 1}). First we prove the following theorem, which assumes that $E$ satisfies the \PS{} condition.

\begin{theorem} \label{Theorem 2}
Assume that $(A_1)$--$(G_4)$ hold and $E$ satisfies the {\em \PS{c}} condition for all $c \in \R$. If $\lambda > 0$, then $\exists\, \mu_0 > 0$ such that equation \eqref{4} has two nontrivial solutions $u_1, u_2$ with
\[
E(u_1) < 0 < E(u_2)
\]
for $0 < \mu < \mu_0$.
\end{theorem}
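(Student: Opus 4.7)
I plan to apply Theorem \ref{Theorem 1} to the functional $E$ in \eqref{22}, with $\M = S := \set{u \in W : \norm{u} = 1}$. The sets $A_0, B_0$ are selected via the variational $\Z_2$-cohomological eigenvalue theory of the nonlinear problem $\Ap[u] = \lambda \Bp[u]$. Under $(A_1)$--$(B_3)$, one defines
\[
\lambda_k := \inf_{\substack{M \subset S,\, M = -M\\ i(M) \ge k}}\, \sup_{u \in M} \frac{1}{p\, J_p(u)},
\]
forming an increasing sequence with $\lambda_k \to \infty$ by the compactness of $\Bp$. Take $k \ge 1$ with $\lambda_k \le \lambda < \lambda_{k+1}$ (perturbing $\lambda$ slightly if necessary), pick $\delta \in (0, \lambda_{k+1} - \lambda)$, and set
\[
A_0 = \set{u \in S : p\lambda J_p(u) \ge 1}, \qquad B_0 = \set{u \in S : p(\lambda+\delta) J_p(u) \le 1};
\]
a standard index calculation based on the minimax definition of $\lambda_k$ yields $i(A_0) = i(S\setminus B_0) = k$, and fixing any $w_0 \in B_0$ makes the configuration of Theorem \ref{Theorem 1} well-defined. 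The complementary boundary case $\lambda < \lambda_1$ would be treated by a simpler direct mountain-pass-plus-Ekeland argument exploiting the coercivity of $I_p - \lambda J_p$.

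The analytic core is to verify \eqref{2} together with the strict inequalities $\sup_A E < 0 < \inf_B E$ (so that the two critical values from Theorem \ref{Theorem 1} straddle zero). On $A_0$, the homogeneous estimate $I_p - \lambda J_p \le 0$ gives $E(tv) \le -\mu F(tv) - G(tv) < 0$ on the compact segment $\set{tv : v \in A_0,\, r \le t \le R}$, with a uniform strictly negative upper bound. On the radial caps $\set{ru : u \in A_1}$ and $\set{Ru : u \in A_1}$, where $I_p - \lambda J_p$ is only bounded above by $t^p/p$, I exploit the compactness of $A_1$ together with $(F_1)$ to make $\mu F(ru)/r^p > 1/p$ uniformly in $u \in A_1$ for $r$ small enough, and $(G_4)$ to make $G(Ru)/R^p > 1/p$ uniformly for $R$ large enough, forcing $E$ strictly negative on each cap. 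For the $B$-side, on $B_0$ one has the positive lower bound $I_p - \lambda J_p \ge \delta/(p(\lambda+\delta))$, so for $w \in B_0$,
\[
E(\rho w) \ge c\rho^p - \mu \sup_{v \in S} F(\rho v) - \sup_{v \in S} G(\rho v), \qquad c = \frac{\delta}{p(\lambda+\delta)} > 0.
\]
By $(G_1)$, $\sup_{v \in S} G(\rho v) \le (c/4)\rho^p$ for $\rho$ small, and by $(F_3)$, $\sup_{v \in S} F(\rho v) < \infty$; choosing $\mu < \mu_0 := c\rho^p / (4\sup_{v \in S} F(\rho v))$ then yields $\inf_B E \ge (c/2)\rho^p > 0$. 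The remaining conditions $a < \inf_{B^\ast} E$ and $\sup_{A^\ast} E < b$ hold for appropriate $a, b \in \R$ by continuity of $E$ and boundedness of the sets $A^\ast, B^\ast$.

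Theorem \ref{Theorem 1}, applied under the global Palais--Smale hypothesis, then produces $u_1, u_2$ with $E(u_1) \le \sup_A E < 0 < \inf_B E \le E(u_2)$, which is the conclusion of Theorem \ref{Theorem 2}. The principal difficulty is the control of $\mu F$ against the indefinite form $I_p - \lambda J_p$ on the full bridging cone $A_1$, not merely on $A_0$ where the sign is direct: this requires $A_1$ (and hence $A_0$) to be norm-compact so that $(F_1)$ supplies a uniform estimate, which is ultimately arranged through the minimax construction of Step 1 and the compactness hypothesis $(B_3)$.
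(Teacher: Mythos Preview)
Your overall strategy matches the paper's: apply Theorem \ref{Theorem 1} with $A_0$ taken from a sublevel set of $\Psi$ and $B_0$ from a superlevel set, then verify $\sup_A E < 0 < \inf_B E$ via the three-piece decomposition of $A$ using $(F_1)$, $(G_4)$ on the caps and the sign of $I_p - \lambda J_p$ on the $A_0$-cylinder, and $(G_1)$, $(F_3)$ for the $B$-estimate. The index computation $i(A_0) = i(S \setminus B_0) = k$ and the ordering of quantifiers ($\rho$ first, then $\mu_0$, then $r, R$ depending on $\mu$) are also in line with the paper.

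There is, however, a genuine gap at exactly the point you flag as ``the principal difficulty.'' Your $A_0 = \set{u \in S : p\lambda J_p(u) \ge 1}$ is the full sublevel set $\Psi^\lambda$, which is closed and bounded but \emph{not} compact in the infinite-dimensional space $W$. Consequently $A_1$ is not compact either, and the uniform estimates you invoke from $(F_1)$ and $(G_4)$ (which are stated only for compact subsets of $W \setminus \set{0}$) do not apply. You acknowledge that compactness of $A_0$ is needed and assert it is ``ultimately arranged through the minimax construction \ldots and $(B_3)$,'' but you never carry this out, and it is not a routine consequence of the minimax definition of $\lambda_k$.

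The paper resolves this via a separate result (Theorem \ref{Theorem 4}): one introduces the solution operator $K : W \to W$, $Kw$ being the unique solution of $\Ap[u] = \Bp[w]$, shows $K$ is compact (this is where $(B_3)$ enters) and satisfies $\Psi(\pi_\M(Kw)) \le \Psi(w)$, and then takes $A_0 = \closure{\pi_\M(K(\Psi^{\lambda_k}))}$. This set is compact, lies in $\Psi^{\lambda_k}$, and retains index $k$ by monotonicity. Without this construction (or an equivalent), your argument on the caps $\set{ru : u \in A_1}$ and $\set{Ru : u \in A_1}$ does not go through. A minor additional point: the case $0 < \lambda < \lambda_1$ does not require a separate Ekeland/mountain-pass argument; the paper simply takes $A_0 = \emptyset$, $B_0 = \M$, $k = 0$ and applies Theorem \ref{Theorem 1} directly, and ``perturbing $\lambda$'' is neither needed nor permitted.
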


We will prove Theorem \ref{Theorem 2} in Section \ref{Section 3}. The proof will involve the nonlinear eigenvalue problem
\begin{equation} \label{5}
\Ap[u] = \lambda \Bp[u].
\end{equation}
Let $\M = \set{u \in W : I_p(u) = 1}$. Then $\M \subset W \setminus \set{0}$ is a bounded complete symmetric $C^1$-Finsler manifold radially homeomorphic to the unit sphere in $W$, and eigenvalues of problem \eqref{5} coincide with critical values of the $C^1$-functional
\[
\Psi(u) = \frac{1}{J_p(u)}, \quad u \in \M.
\]
Denote by $\F$ the class of symmetric subsets of $\M$ and by $i(M)$ the cohomological index of $M \in \F$, let $\F_k = \set{M \in \F : i(M) \ge k}$, and set
\[
\lambda_k := \inf_{M \in \F_k}\, \sup_{u \in M}\, \Psi(u), \quad k \in \N.
\]
Then
\[
\lambda_1 = \inf_{u \in \M}\, \Psi(u) > 0
\]
is the first eigenvalue of problem \eqref{5} and $\lambda_1 \le \lambda_2 \le \cdots$ is an unbounded sequence of eigenvalues. Moreover, denoting by $\Psi^a = \set{u \in \M : \Psi(u) \le a}$ (resp. $\Psi_a = \set{u \in \M : \Psi(u) \ge a}$) the sublevel (resp. superlevel) sets of $\Psi$, we have
\begin{equation} \label{6}
\lambda_k < \lambda_{k+1} \implies i(\Psi^{\lambda_k}) = i(\M \setminus \Psi_{\lambda_{k+1}}) = k
\end{equation}
(see Perera et al.\! \cite[Theorem 4.6]{MR2640827}). An intermediate step in the proof of Theorem \ref{Theorem 2} is the following result of independent interest.

\begin{theorem} \label{Theorem 4}
Assume that $(A_1)$--$(B_3)$ hold. If $\lambda_k < \lambda_{k+1}$, then the sublevel set $\Psi^{\lambda_k}$ has a compact symmetric subset of index $k$.
\end{theorem}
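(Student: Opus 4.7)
The plan is to combine the identity $i(\Psi^{\lambda_k}) = k$ furnished by \eqref{6} with a Palais--Smale deformation that replaces a minimizing $\F_k$-set by a compact one sitting inside $\Psi^{\lambda_k}$, the compactness being driven by assumption $(B_3)$.

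First I would verify that $\Psi$ satisfies \PS{c} on the bounded Finsler manifold $\M$ for every $c > 0$. For a sequence $u_n \in \M$ with $\Psi(u_n) \to c$ and tangential derivative vanishing, boundedness of $\M$ gives $u_n \wto u$ along a subsequence; $(B_3)$ makes $B_p u_n \to B_p u$ in $W^\ast$, so $J_p(u_n) \to J_p(u) = 1/c > 0$ and hence $u \ne 0$; coupling the vanishing tangential derivative with $(A_2)$ and the uniform convexity of $W$ forces $u_n \to u$ in norm. In particular, the critical set $K_{\lambda_k}$ of $\Psi$ at level $\lambda_k$ is compact.

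By \eqref{6}, $\Psi^{\lambda_k}$ is closed and symmetric in $\M$ with $i(\Psi^{\lambda_k}) = k$. Choose $\eps \in (0, \lambda_{k+1} - \lambda_k)$ and, by the infimum characterization of $\lambda_k$, a closed symmetric $M \in \F_k$ with $M \subset \Psi^{\lambda_k + \eps/2}$. The main step is to construct an odd continuous map $\eta \colon M \to \Psi^{\lambda_k}$ whose image has compact closure. I would do this by following the negative pseudo-gradient flow of $\Psi$ on $\M$: any critical value $c \in (\lambda_k, \lambda_{k+1})$ must have $i(K_c) \le k - 1$ (else $K_c \in \F_k$ would give $\lambda_k \le c < \lambda_{k+1}$ strictly, contradicting the minimax value of $\lambda_k$), so the flow can be routed around such levels by excising an index-preserving symmetric neighborhood of $K_c$ before continuing down to $\lambda_k$. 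The closure of the image is precompact because (i) \PS{\lambda_k} forces the accumulation to take place on the compact set $K_{\lambda_k}$, and (ii) the compactness of $B_p$ converts weak to strong convergence of the $J_p$-values that drive the flow. Setting $K = \overline{\eta(M)}$ then yields a compact symmetric subset of $\Psi^{\lambda_k}$; by monotonicity of the index under the odd continuous map $\eta$, $i(K) \ge i(\eta(M)) \ge i(M) \ge k$, and by \eqref{6} and monotonicity $i(K) \le i(\Psi^{\lambda_k}) = k$, so $i(K) = k$.

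The main obstacle is producing an $\eta$ with precompact image. A generic negative pseudo-gradient flow on an infinite-dimensional Finsler manifold does not produce precompact trajectories; one has to exploit $(B_3)$, either by choosing a pseudo-gradient that factors through the compact operator $B_p$, or by showing directly that the long-time limit set of the flow on $M$ clusters on $K_{\lambda_k}$. A secondary technical point is the handling of possible intermediate critical values in $(\lambda_k, \lambda_{k+1})$; the index bound $i(K_c) \le k-1$ derived above is what keeps the index from dropping while the flow crosses such levels.
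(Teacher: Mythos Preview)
Your approach differs from the paper's and, as you yourself flag, has a genuine gap at the decisive step. The paper does not use a pseudo-gradient flow at all. Instead it builds an explicit compact odd map that decreases $\Psi$: for each $w \in W$ let $Kw = u$ be the unique solution of $\Ap[u] = \Bp[w]$ (existence by minimizing $\Phi(u) = \frac{1}{p}\dualp{\Ap[u]}{u} - \dualp{\Bp[w]}{u}$, uniqueness by strict monotonicity of $\Ap$). Compactness of $\Bp$ together with the type-$(S)$ property of $\Ap$ makes $K$ a compact operator on $W$. From $(A_2)$ and $(B_2)$ one checks the single inequality $\Psi(\pi_\M(Kw)) \le \Psi(w)$ for $w \in \M$. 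Then $C_0 = \closure{\pi_\M(K(\Psi^{\lambda_k}))}$ is automatically compact (image of a bounded set under a compact map, followed by the continuous $\pi_\M$, which is well defined because \eqref{8} keeps $K(\Psi^{\lambda_k})$ away from $0$), symmetric, contained in $\Psi^{\lambda_k}$, and of index $k$ by monotonicity and \eqref{6}. No deformation theory, no handling of intermediate critical levels, is needed.

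Your flow argument does not deliver compactness. Even granting the Palais--Smale condition for $\Psi$, the negative pseudo-gradient flow on an infinite-dimensional Finsler manifold has no reason to send a bounded set to a precompact one: the Palais--Smale condition constrains only Palais--Smale sequences, not arbitrary sequences of flow images, and $\closure{\eta(M)}$ contains all intermediate trajectory points, not just the $\omega$-limit sets. Your suggestion to ``choose a pseudo-gradient that factors through $\Bp$'' is precisely what the paper's resolvent-type map $K$ accomplishes, but you have not written down such a vector field, let alone verified that it is an admissible pseudo-gradient for $\Psi$. There is also a logical slip in your treatment of intermediate critical values: from $K_c \in \F_k$ you only get $\lambda_k \le \sup_{K_c}\Psi = c$, which is no contradiction since $c > \lambda_k$ by hypothesis; so the claimed bound $i(K_c) \le k-1$ is unproved.
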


\begin{remark}
Eigenvalues based on the cohomological index were first introduced in Perera \cite{MR1998432}.
\end{remark}

Theorem \ref{Theorem 2} can be used to obtain a pair of nontrivial solutions of the $p$-Laplacian problem \eqref{3} for all $\lambda > 0$ and sufficiently small $\mu > 0$ in the subcritical case. More precisely, we have the following multiplicity result.

\begin{theorem}
Let $1 < p < q$, with $q < p^\ast$ if $p < N$ and $q < \infty$ if $p \ge N$, let $\lambda > 0$, and let $f$ be a Carath\'{e}odory function on $\Omega \times \R$ satisfying \eqref{18}--\eqref{27} for some $1 < \sigma < p < r < q$. Then $\exists\, \mu_0 > 0$ such that problem \eqref{3} has two nontrivial solutions $u_1, u_2$ with
\[
E(u_1) < 0 < E(u_2)
\]
for $0 < \mu < \mu_0$.
\end{theorem}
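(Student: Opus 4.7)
The plan is a direct application of Theorem \ref{Theorem 2}. Work in the uniformly convex Banach space $W = W^{1,p}_0(\Omega)$ with norm $\norm{u} = \left(\int_\Omega |\nabla u|^p\, dx\right)^{1/p}$, and take $\Ap[u] = - \Delta_p\, u$, $\Bp[u] = |u|^{p-2}\, u$, and $g(u) = |u|^{q-2}\, u$, together with the Nemytskii operator associated with $f(x,\cdot)$, all viewed as elements of $W^\ast$ through the usual pairing. Their potentials are $I_p(u) = \norm{u}^p/p$, $J_p(u) = (1/p) \int_\Omega |u|^p\, dx$, $G(u) = (1/q) \int_\Omega |u|^q\, dx$, and $F(u) = \int_\Omega F(x,u)\, dx$, so the abstract functional in \eqref{22} coincides with the energy functional of problem \eqref{3}, and its nontrivial critical points are nontrivial weak solutions.

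Next I would verify the hypotheses $(A_1)$--$(G_4)$, most of which are routine. Conditions $(A_1)$--$(B_2)$ are standard algebraic properties of the $p$-Laplacian and duality map combined with H\"older's inequality; $(B_3)$ is the compact embedding $W \incl L^p(\Omega)$. The sublinear condition \eqref{18} with $\sigma < p$ yields $(F_1)$, since $F(tu)/|t|^p \sim |t|^{\sigma - p}\, \sigma^{-1} \int_\Omega |u|^\sigma\, dx \to + \infty$ as $t \to 0$, uniformly on norm-compact subsets of $W \setminus \set{0}$; \eqref{30} gives $(F_2)$; the growth bound \eqref{27} together with the Sobolev embedding $W \incl L^r(\Omega)$ gives $(F_3)$, and similarly $(G_3)$ follows. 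Since $G$ is $q$-homogeneous with $q > p$ and $\int_\Omega |u|^q\, dx \le C\, \norm{u}^q$, conditions $(G_1)$, $(G_2)$, and $(G_4)$ are immediate.

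The main obstacle is the \PS{c} condition at every level $c \in \R$, which Theorem \ref{Theorem 2} demands. For boundedness of a \PS{c} sequence $(u_n)$, my plan is to exploit the identity
\[
pE(u_n) - \dualp{E'(u_n)}{u_n} = \Bigl(1 - \frac{p}{q}\Bigr) \int_\Omega |u_n|^q\, dx\; + \mu \int_\Omega \bigl[f(x,u_n)\, u_n - pF(x,u_n)\bigr]\, dx,
\]
to bound the last integral by $C (\int_\Omega |u_n|^r\, dx + \int_\Omega |u_n|\, dx) + C$ via \eqref{27}, and then absorb the $L^r$ piece into the $L^q$ piece using $r < q$ and H\"older/Young on the bounded domain $\Omega$, thereby obtaining $\int_\Omega |u_n|^q\, dx \le C(1 + \norm{u_n})$. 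Feeding this, and the analogous estimates for $\int |u_n|^p\, dx$ and $F(u_n)$, back into the formula $\norm{u_n}^p = p\, E(u_n) + \lambda \int |u_n|^p\, dx + p\mu\, F(u_n) + (p/q) \int |u_n|^q\, dx$ produces $\norm{u_n}^p \le C(1 + \norm{u_n})$, forcing boundedness. For convergence, pass to a weakly convergent subsequence $u_n \wto u$ and exploit the compactness of the embeddings $W \incl L^p, L^r, L^q$ (which is where strictly subcritical $q < p^\ast$ is essential) to kill the lower-order terms in $\dualp{E'(u_n)}{u_n - u} \to 0$; this reduces matters to $\dualp{- \Delta_p\, u_n}{u_n - u} \to 0$, and the $(S_+)$ property of the $p$-Laplacian then gives $u_n \to u$ strongly in $W$. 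With \PS{c} for all $c \in \R$ in hand, Theorem \ref{Theorem 2} directly yields $\mu_0 > 0$ and the pair of nontrivial solutions $u_1, u_2$ with $E(u_1) < 0 < E(u_2)$ for $0 < \mu < \mu_0$.
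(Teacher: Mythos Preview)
Your proposal is correct and follows exactly the approach indicated in the paper: apply Theorem \ref{Theorem 2} with $W = W^{1,p}_0(\Omega)$ and the operators $\Ap, \Bp, f, g$ defined via the standard duality pairings, then verify $(A_1)$--$(G_4)$ and the \PS{c} condition. The paper's own argument is in fact terser than yours---it records only that ``it is easily seen that $(A_1)$--$(G_4)$ hold'' and leaves the (standard, subcritical) \PS{} verification implicit---so your sketch of the boundedness and $(S_+)$-compactness argument simply fills in details the paper omits.
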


This theorem follows from Theorem \ref{Theorem 2} with $W = W^{1,\,p}_0(\Omega)$ and the operators $\Ap, \Bp, f, g \linebreak \in C(W^{1,\,p}_0(\Omega),W^{-1,\,p'}(\Omega))$ given by
\begin{multline*}
\dualp{\Ap[u]}{v} = \int_\Omega |\nabla u|^{p-2}\, \nabla u \cdot \nabla v\, dx, \quad \dualp{\Bp[u]}{v} = \int_\Omega |u|^{p-2}\, uv\, dx,\\[5pt]
\dualp{f(u)}{v} = \int_\Omega f(x,u)\, v\, dx, \quad \dualp{g(u)}{v} = \int_\Omega |u|^{q-2}\, uv\, dx, \quad u, v \in W^{1,\,p}_0(\Omega).
\end{multline*}
It is easily seen that $(A_1)$--$(G_4)$ hold.

As another application of Theorem \ref{Theorem 2}, consider the Kirchhoff type nonlocal problem
\begin{equation} \label{19}
\left\{\begin{aligned}
- \left(\int_\Omega |\nabla u|^2\, dx\right) \Delta u & = \lambda u^3 + \mu f(x,u) + |u|^{q-2}\, u && \text{in } \Omega\\[10pt]
u & = 0 && \text{on } \bdry{\Omega},
\end{aligned}\right.
\end{equation}
where $\Omega$ is a bounded domain in $\R^N,\, N = 1, 2, \text{or } 3$, $4 < q < 6$ if $N = 3$ and $4 < q < \infty$ if $N = 1 \text{ or } 2$, $\lambda, \mu > 0$ are parameters, and $f$ is a Carath\'{e}odory function on $\Omega \times \R$ satisfying \eqref{18}--\eqref{27} for some $1 < \sigma < 4 < r < q$. Solutions of this problem coincide with critical points of the functional
\[
E(u) = \frac{1}{4} \left(\int_\Omega |\nabla u|^2\, dx\right)^2 - \frac{\lambda}{4} \int_\Omega u^4\, dx - \mu \int_\Omega F(x,u)\, dx - \frac{1}{q} \int_\Omega |u|^q\, dx, \quad u \in H^1_0(\Omega),
\]
where $F(x,t) = \int_0^t f(x,s)\, ds$. Applying Theorem \ref{Theorem 2} with $W = H^1_0(\Omega)$ and the operators $\Ap, \Bp, f, g \in C(H^1_0(\Omega),H^{-1}(\Omega))$ given by
\begin{multline*}
\dualp{\Ap[u]}{v} = \left(\int_\Omega |\nabla u|^2\, dx\right) \int_\Omega \nabla u \cdot \nabla v\, dx, \quad \dualp{\Bp[u]}{v} = \int_\Omega u^3 v\, dx,\\[5pt]
\dualp{f(u)}{v} = \int_\Omega f(x,u)\, v\, dx, \quad \dualp{g(u)}{v} = \int_\Omega |u|^{q-2}\, uv\, dx, \quad u, v \in H^1_0(\Omega),
\end{multline*}
we have the following multiplicity result.

\begin{theorem}
Let $q > 4$, with $q < 6$ if $N = 3$ and $q < \infty$ if $N = 1 \text{ or } 2$, let $\lambda > 0$, and let $f$ be a Carath\'{e}odory function on $\Omega \times \R$ satisfying \eqref{18}--\eqref{27} for some $1 < \sigma < 4 < r < q$. Then $\exists\, \mu_0 > 0$ such that problem \eqref{19} has two nontrivial solutions $u_1, u_2$ with
\[
E(u_1) < 0 < E(u_2)
\]
for $0 < \mu < \mu_0$.
\end{theorem}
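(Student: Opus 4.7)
The strategy is to recognise \eqref{19} as an instance of the abstract equation \eqref{4} on $W = H^1_0(\Omega)$ with the norm $\norm{u} = (\int_\Omega |\nabla u|^2\, dx)^{1/2}$ and the exponent $p = 4$, and then to invoke Theorem \ref{Theorem 2}. The key structural observation is that the nonlocal Kirchhoff operator $\Ap$ and the eigenvalue operator $\Bp$ displayed above are both $3$-homogeneous and odd, so $p = 4$ is the natural choice of homogeneity exponent, with potentials $I_p(u) = \frac{1}{4}\norm{u}^4$ and $J_p(u) = \frac{1}{4}\int_\Omega u^4\, dx$.

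First I would verify $(A_1)$--$(B_2)$ by direct computation. Homogeneity and oddness are immediate from the integral formulas, and $\dualp{\Ap[u]}{u} = \norm{u}^4$. The inequality in $(A_2)$ is Cauchy--Schwarz applied to $\int_\Omega \nabla u \cdot \nabla v\, dx$, with the equality case forcing $\nabla u$ and $\nabla v$ to be positively parallel and hence $\alpha u = \beta v$. For $(B_2)$, H\"older's inequality with exponents $4/3$ and $4$ gives
\[
\int_\Omega u^3 v\, dx \le \Bigl(\int_\Omega u^4\, dx\Bigr)^{3/4} \Bigl(\int_\Omega v^4\, dx\Bigr)^{1/4} = \dualp{\Bp[u]}{u}^{(p-1)/p}\, \dualp{\Bp[v]}{v}^{1/p}.
\]
Compactness $(B_3)$ and the boundedness assumptions $(F_3)$, $(G_3)$ follow from the Sobolev embeddings $H^1_0(\Omega) \incl L^s(\Omega)$ for $s \in \set{4,r,q}$, which are all compact because $N \le 3$ and each of $4$, $r$, $q$ lies strictly below the critical Sobolev exponent (with the convention $2^\ast = + \infty$ when $N \le 2$).

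The behaviour of $F$ and $G$ near $0$ and at infinity then follows from the hypotheses on $f$ and the explicit form of $G$. The asymptotic \eqref{18} with $\sigma < 4$ combined with a standard dominated-convergence argument yields $F(tu)/|t|^p \to + \infty$ as $t \to 0$ uniformly on compact subsets of $W \setminus \set{0}$, giving $(F_1)$; the sign condition \eqref{30} gives $(F_2)$. The identity $G(tu)/t^p = t^{q-4}\, \norm[q]{u}^q/q$ with $q > 4 = p$ together with positivity of $G$ yields $(G_2)$ and $(G_4)$, while the Sobolev estimate $G(u) \le C \norm{u}^q = \o(\norm{u}^p)$ as $u \to 0$ gives $(G_1)$.

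The main technical obstacle is verifying the \PS{c} condition for every $c \in \R$. I would proceed along the standard two-step route: first show that any \PS{c} sequence $\seq{u_n}$ is bounded in $W$, then promote a weakly convergent subsequence to a strongly convergent one. For boundedness, exploiting $q > 4 > r$, I would combine the identities for $E(u_n)$ and $\dualp{E'(u_n)}{u_n}$ with H\"older's inequality $\int_\Omega u^4\, dx \le |\Omega|^{1 - 4/q}\bigl(\int_\Omega |u|^q\, dx\bigr)^{4/q}$ and the growth estimate \eqref{27} to absorb the indefinite $\lambda \int u_n^4$ term into the superhomogeneous $\int |u_n|^q$ term, and then feed the resulting bound on $\int|u_n|^q$ back into $E(u_n)$ to bound $\norm{u_n}$. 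With $\seq{u_n}$ bounded, compactness of $\Bp$, $f$, and the $L^q$-embedding combined with the $(\text{S}_+)$-type property of $\Ap$ that follows from $(A_1)$--$(A_2)$ (itself a consequence of the Hilbert structure and the identity $\dualp{\Ap[u_n]}{u_n - u} = \norm{u_n}^2 \int_\Omega \nabla u_n \cdot \nabla(u_n - u)\, dx$) upgrades any weakly convergent subsequence to a strongly convergent one. Theorem \ref{Theorem 2} then yields the required $\mu_0 > 0$ and a pair of nontrivial solutions $u_1, u_2$ with $E(u_1) < 0 < E(u_2)$.
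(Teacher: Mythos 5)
Your proposal is correct and takes essentially the same approach as the paper: the paper states this theorem as an immediate consequence of Theorem \ref{Theorem 2} applied with $W = H^1_0(\Omega)$, $p = 4$, and the displayed operators, leaving the verification of $(A_1)$--$(G_4)$ and the \PS{} condition to the reader, and your proposal simply carries out that routine verification in full (correctly identifying $p=4$ as the homogeneity exponent, checking $(A_2)$ via Cauchy--Schwarz and $(B_2)$ via H\"older with exponents $4/3,4$, and establishing boundedness of \PS{} sequences and strong convergence via compact embeddings and the $(S)$-property of Proposition \ref{Proposition 2}).
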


Finally we prove a variant of Theorem \ref{Theorem 2} that only assumes a local \PS{} condition and hence applicable to critical growth problems.

\begin{theorem} \label{Theorem 5}
Assume that $(A_1)$--$(G_4)$ hold and $\exists\, c_\mu > 0$ such that $E$ satisfies the {\em \PS{c}} condition for all $c < c_\mu$. If $0 < \lambda < \lambda_1$, assume that $\exists\, w_0 \in \M$ such that
\begin{equation} \label{23}
\sup_{t \ge 0}\, E(tw_0) < c_\mu
\end{equation}
for all sufficiently small $\mu > 0$. If $\lambda_k \le \lambda < \lambda_{k+1}$, assume that there exist a compact symmetric subset $C$ of $\Psi^\lambda$ with $i(C) = k$ and $w_0 \in \M \setminus C$ such that
\begin{equation} \label{24}
\sup_{v \in C,\, s, t \ge 0}\, E(sv + tw_0) < c_\mu
\end{equation}
for all sufficiently small $\mu > 0$. Then $\exists\, \mu_0 > 0$ such that equation \eqref{4} has two nontrivial solutions $u_1, u_2$ with
\[
E(u_1) < 0 < E(u_2) < c_\mu
\]
for $0 < \mu < \mu_0$.
\end{theorem}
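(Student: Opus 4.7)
The plan is to mirror the strategy of Theorem \ref{Theorem 2}---producing $u_1$ as a critical point with $E(u_1) < 0$ and $u_2$ as a linking-type critical point with $E(u_2) > 0$---but with the critical level of $u_2$ quantitatively held below $c_\mu$ so that the assumed local Palais--Smale condition suffices. Hypotheses \eqref{23} and \eqref{24} are designed precisely to deliver this bound. A common preliminary in both cases: combining $(F_3)$, $(G_1)$ with the spectral estimate $J_p(u) \le 1/\lambda_{k+1}$ on $\Psi_{\lambda_{k+1}}$ (or coercivity of $I_p - \lambda J_p$ when $\lambda < \lambda_1$) shows that for some small $\rho > 0$ and all sufficiently small $\mu > 0$, $E \ge \alpha > 0$ on $B := \set{\rho w : w \in B_0}$, where $B_0 = \M$ when $\lambda < \lambda_1$ and $B_0 = \Psi_{\lambda_{k+1}}$ otherwise.

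In the subcritical case $\lambda < \lambda_1$, for $u_1$ I would minimize $E$ on the ``ball'' enclosed by $B$: by $(F_1)$, $E$ takes negative values arbitrarily close to $0$, while $E \ge \alpha > 0$ on $B$; weak lower semicontinuity (from uniform convexity of $W$) together with boundedness below then yields a strict interior minimizer $u_1$ with $E(u_1) < 0$. For $u_2$, I would apply the standard mountain pass theorem: geometry is provided by $E(0) = 0$, the barrier on $B$, and $E(Tw_0) < 0$ for large $T$ via $(G_4)$; the mountain pass value satisfies $c \le \sup_{t \ge 0} E(tw_0) < c_\mu$ by \eqref{23}, so local PS at $c \in (0, c_\mu)$ yields $u_2$ with $E(u_2) = c$.

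In the resonant case $\lambda_k \le \lambda < \lambda_{k+1}$, I would apply Theorem \ref{Theorem 1} with $A_0 = C$ and $B_0 = \Psi_{\lambda_{k+1}}$: disjointness follows from $C \subset \Psi^\lambda$ and $\lambda < \lambda_{k+1}$, and $i(A_0) = i(\M \setminus B_0) = k$ combines the hypothesis on $C$ with \eqref{6}. With $w_0$ as given and $0 < r < \rho < R$ chosen small/large, I would verify \eqref{2} at $b = c_\mu$: $\inf_B E > 0$ as above; $\sup_A E < 0$ on the lateral piece $\set{tv : v \in C,\, r \le t \le R}$ via $I_p(tv) - \lambda J_p(tv) = t^p(1 - \lambda J_p(v)) \le 0$ (since $v \in C \subset \Psi^\lambda$ gives $J_p(v) \ge 1/\lambda$) together with $-\mu F \le 0,\, -G \le 0$ from $(F_2), (G_2)$, on the inner cap $rA_1$ via $(F_1)$ and compactness of $A_1$ for small $r$, and on the outer cap $RA_1$ via $(G_4)$ and compactness of $A_1$ for large $R$; finally $\sup_{A^\ast} E < c_\mu$ because every $u \in A^\ast$ has the form $\alpha v + \beta w_0$ with $v \in C$, $\alpha, \beta \ge 0$, so \eqref{24} applies directly. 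Theorem \ref{Theorem 1} then delivers both $u_1, u_2$ with $E(u_1) \le \sup_A E < 0 < \inf_B E \le E(u_2) \le \sup_{A^\ast} E < c_\mu$.

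The main obstacle is the simultaneous control of $\sup_A E < 0$ on the two cap pieces $rA_1$ and $RA_1$, where the spectral estimate is unavailable and only $(F_1)$ and $(G_4)$ (uniform on compact subsets of $W \setminus \set{0}$) can be invoked. Compactness of $A_1$ as the continuous image of $C \times [0, 1]$ under $(v, s) \mapsto \pi_\M((1-s)v + sw_0)$ is essential for using those uniform versions; the symmetry of $C$ together with $w_0 \in \M \setminus C$ forces $-w_0 \not\in C$, so $(1-s)v + sw_0 \ne 0$ throughout and the radial projection is well-defined. Coordinating the choices of $r, R, \rho$ and the $\mu$-threshold so that all parts of \eqref{2} hold simultaneously at $b = c_\mu$ constitutes the main bookkeeping.
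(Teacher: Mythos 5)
Your treatment of the resonant case $\lambda_k \le \lambda < \lambda_{k+1}$ matches the paper's proof exactly: take $A_0 = C$, $B_0 = \Psi_{\lambda_{k+1}}$, verify the geometry of Theorem~\ref{Theorem 1} as in Theorem~\ref{Theorem 2}, and use that every element of $A^\ast$ has the form $sv+tw_0$ with $v \in C$, $s,t \ge 0$ so that \eqref{24} gives $\sup_{A^\ast} E < c_\mu$; then apply Theorem~\ref{Theorem 1} with $b=c_\mu$. The observation about $-w_0 \notin C$ ensuring that $\pi_\M((1-s)v+sw_0)$ is well-defined is also on point.

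Where you deviate is the case $0 < \lambda < \lambda_1$: the paper simply feeds $A_0 = \emptyset$, $B_0 = \M$ into the same Theorem~\ref{Theorem 1} machine (with $k=0$, so $A_1=\{w_0\}$, $A$ is the pair $\{rw_0,Rw_0\}$, $A^\ast = \{tw_0: r\le t\le R\}$, and $B$ is a sphere); this is exactly the cohomological incarnation of the mountain-pass picture and also produces $u_1$ via $H^0(E^\beta,E^\alpha)\neq 0$. Your mountain-pass treatment of $u_2$ is fine and yields the same level bound. However, your production of $u_1$ by direct minimization has a genuine gap: you invoke weak lower semicontinuity of $E$ ``from uniform convexity of $W$,'' but uniform convexity only gives weak lsc of $I_p(u)=\norm{u}^p/p$. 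Nothing in $(F_1)$--$(G_4)$ makes $-\mu F - G$ weakly upper semicontinuous, and in the intended critical applications $G(u)=\frac{1}{p^\ast}\int|u|^{p^\ast}$, so $-G$ is weakly \emph{usc} only in the wrong direction (it is $G$ that is weakly lsc); the sum $E$ is neither weakly lsc nor weakly usc. So a minimizing sequence in the closed ball need not converge weakly to a minimizer. What makes the step work is precisely the local Palais--Smale condition: since the infimum of $E$ on $B^\ast$ is negative (hence $< c_\mu$) and $E>\alpha>0$ on $\partial B^\ast = B$, an Ekeland-type argument in the open ball yields a \PS{} sequence at the infimum level, and \PS{c} for $c<c_\mu$ gives convergence to an interior critical point. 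Equivalently (and more in the spirit of the paper), one can skip the ad hoc split entirely and let Theorem~\ref{Theorem 1} handle $k=0$ uniformly, which is what the published proof does.
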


We will prove Theorem \ref{Theorem 5} in Section \ref{Section 3}. As an application, consider the critical $p$-Laplacian problem
\begin{equation} \label{25}
\left\{\begin{aligned}
- \Delta_p\, u & = \lambda\, |u|^{p-2}\, u + \mu f(x,u) + |u|^{p^\ast - 2}\, u && \text{in } \Omega\\[10pt]
u & = 0 && \text{on } \bdry{\Omega},
\end{aligned}\right.
\end{equation}
where $\Omega$ is a bounded domain in $\R^N$, $1 < p < N$, $\lambda, \mu > 0$ are parameters, and $f$ is a Carath\'{e}odory function on $\Omega \times \R$ satisfying \eqref{18}--\eqref{27} for some $1 < \sigma < p < r < p^\ast$. Solutions of this problem coincide with critical points of the functional
\[
E(u) = \frac{1}{p} \int_\Omega |\nabla u|^p\, dx - \frac{\lambda}{p} \int_\Omega |u|^p\, dx - \mu \int_\Omega F(x,u)\, dx - \frac{1}{p^\ast} \int_\Omega |u|^{p^\ast} dx, \quad u \in W^{1,\,p}_0(\Omega).
\]
Denote by $\sigma(- \Delta_p)$ the Dirichlet spectrum of $- \Delta_p$ in $\Omega$, consisting of those $\lambda \in \R$ for which the problem
\[
\left\{\begin{aligned}
- \Delta_p\, u & = \lambda\, |u|^{p-2}\, u && \text{in } \Omega\\[10pt]
u & = 0 && \text{on } \bdry{\Omega}
\end{aligned}\right.
\]
has a nontrivial solution. Let
\begin{equation} \label{28}
S = \inf_{u \in W^{1,\,p}_0(\Omega) \setminus \set{0}}\, \frac{\dint_\Omega |\nabla u|^p\, dx}{\left(\dint_\Omega |u|^{p^\ast} dx\right)^{p/p^\ast}}
\end{equation}
be the best Sobolev constant. We will prove the following multiplicity result in Section \ref{Section 4}.

\begin{theorem} \label{Theorem 6}
Let $p > 1$, let $N \ge p^2$, let $\lambda > 0$ with $\lambda \notin \sigma(- \Delta_p)$, and let $f$ be a Carath\'{e}odory function on $\Omega \times \R$ satisfying \eqref{18}--\eqref{27} for some $1 < \sigma < p < r < p^\ast$. Then $\exists\, \mu_0, \kappa > 0$ such that problem \eqref{25} has two nontrivial solutions $u_1, u_2$ with
\[
E(u_1) < 0 < E(u_2) < \frac{1}{N}\, S^{N/p} - \kappa \mu
\]
for $0 < \mu < \mu_0$.
\end{theorem}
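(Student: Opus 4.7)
The plan is to apply Theorem~\ref{Theorem 5} with $W = W^{1,p}_0(\Omega)$, with $\Ap, \Bp, f$ realized exactly as in the subcritical $p$-Laplacian case and $g(u) = |u|^{p^\ast-2} u$. Verification of $(A_1)$--$(G_4)$ is routine: $\Bp$ is compact by Rellich--Kondrachov, \eqref{18} and $\sigma < p$ yield $(F_1)$, \eqref{30} yields $(F_2)$, and the Sobolev embedding together with $p < p^\ast$ handles $(G_1)$--$(G_4)$ for $G(u) = \frac{1}{p^\ast}\int_\Omega |u|^{p^\ast}\, dx$. Classical concentration--compactness for the critical $p$-Laplacian shows that $E$ satisfies the \PS{c} condition for every $c < \frac{1}{N}\, S^{N/p}$, independently of $\mu$, so I take $c_\mu = \frac{1}{N}\, S^{N/p}$. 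The refinement $E(u_2) < \frac{1}{N}\, S^{N/p} - \kappa \mu$ asserted in the theorem will follow from the test-function estimates below, since Theorem~\ref{Theorem 5} gives $E(u_2) \le \sup_{A^\ast}\, E$ and the supremum will sit strictly below this corrected threshold.

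The heart of the argument is the construction of the linking data. Let $u_\eps$ be the Talenti instanton (a minimizer for $S$ in $\R^N$) centered at a fixed interior point $x_0 \in \Omega$ and truncated by a cutoff to have compact support in $\Omega$, and set $w_\eps = u_\eps/\norm{u_\eps} \in \M$. In the case $0 < \lambda < \lambda_1$, I take $w_0 = w_\eps$ and verify \eqref{23} through the classical Brezis--Nirenberg--type expansion: under $N \ge p^2$, one has $\sup_{t \ge 0} [I_p(tw_\eps) - \lambda J_p(tw_\eps) - G(tw_\eps)] \le \frac{1}{N}\, S^{N/p} + O(\eps^{(N-p)/(p-1)})$, while \eqref{18} gives the further lower bound $\mu F(tw_\eps) \ge \tfrac{\mu}{2\sigma}\, t^\sigma \int_\Omega |w_\eps|^\sigma\, dx$ on bounded intervals of $t$. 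Since $\sigma < p$, the decay of $\int_\Omega |w_\eps|^\sigma\, dx$ is of strictly smaller order than $\eps^{(N-p)/(p-1)}$, so an appropriate choice $\eps = \eps(\mu) \to 0$ yields $\sup_{t \ge 0} E(tw_\eps) < \frac{1}{N}\, S^{N/p} - \kappa \mu$ for all small $\mu > 0$.

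If instead $\lambda_k \le \lambda < \lambda_{k+1}$ (the condition $\lambda \notin \sigma(-\Delta_p)$ together with $\lambda_j \in \sigma(-\Delta_p)$ for all $j$ forces $\lambda_k < \lambda_{k+1}$ strictly), Theorem~\ref{Theorem 4} supplies a compact symmetric subset $C \subset \Psi^{\lambda_k} \subset \Psi^\lambda$ with $i(C) = k$. I again take $w_0 = w_\eps$, noting that $w_\eps \notin C$ for small $\eps$ since $w_\eps \wto 0$ in $W^{1,p}_0(\Omega)$ while $C$ is strongly compact. For every $v \in \Psi^\lambda$ the homogeneity assumptions $(A_1), (B_1)$ give $I_p(sv) - \lambda J_p(sv) \le 0$ for all $s \ge 0$, hence $E(sv) \le 0$. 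The main obstacle is establishing a uniform asymptotic splitting
\begin{equation*}
E(sv + tw_\eps) \;=\; E(sv) + E(tw_\eps) + o(1) \qquad \text{as } \eps \to 0,
\end{equation*}
uniformly for $v \in C$ and $s, t$ in bounded sets, which combined with the previous paragraph gives \eqref{24}. This splitting exploits the concentration of $w_\eps$ at $x_0$: the compactness of $C$ in $W^{1,p}_0(\Omega)$, hence in $L^{p^\ast}(\Omega)$, together with the vanishing $L^r$-mass of $w_\eps$ for every $r < p^\ast$, reduces all mixed integrals entering $I_p, J_p, F, G$ to lower-order terms. Theorem~\ref{Theorem 5} then produces the two nontrivial critical points $u_1, u_2$ with the stated energy bounds.
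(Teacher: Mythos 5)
Your overall framework (apply Theorem~\ref{Theorem 5}, verify $(A_1)$--$(G_4)$, use Aubin--Talenti bubbles) is correct, but the proposal has two genuine gaps, both of which the paper resolves with ideas you haven't supplied.

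First, the Palais--Smale threshold. You assert that $E$ satisfies \PS{c} for every $c < \frac{1}{N} S^{N/p}$ \emph{independently of} $\mu$ and take $c_\mu = \frac{1}{N} S^{N/p}$. This is false without an Ambrosetti--Rabinowitz-type condition on $f$, which is absent here. In the standard splitting argument, if $u_j \wto u$ along a \PS{c} sequence and $\widetilde{u}_j = u_j - u$, one finds $c = \frac{1}{N}\norm{\widetilde{u}_j}^p + E(u) + \o(1)$, and $E(u) = \int_\Omega K(x,u)\,dx$ where $K(x,t) = \frac{1}{N}|t|^{p^\ast} + \mu\left(\frac{1}{p}f(x,t)t - F(x,t)\right)$. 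Since $\frac{1}{p}f(x,t)t - F(x,t)$ has no sign, $E(u)$ can be as negative as $-\kappa\mu$, which forces $\norm{\widetilde{u}_j}^p \le N(c + \kappa\mu) + \o(1)$; comparing against $\norm{\widetilde{u}_j}^p \le \pnorm[p^\ast]{\widetilde{u}_j}^{p^\ast}/S^{p^\ast/p} + \o(1)$ gives compactness only when $c < \frac{1}{N} S^{N/p} - \kappa\mu$. This is precisely the paper's Lemma~\ref{Lemma 1}, and the correct choice is $c_\mu = \frac{1}{N} S^{N/p} - \kappa\mu$. The conclusion $E(u_2) < \frac{1}{N}S^{N/p} - \kappa\mu$ in Theorem~\ref{Theorem 6} is inherited from $E(u_2) < c_\mu$, not, as you suggest, from test-function estimates alone.

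Second, the asymptotic splitting $E(sv + tw_\eps) = E(sv) + E(tw_\eps) + \o(1)$, uniformly in $v \in C$ and $s,t \ge 0$, is the entire technical difficulty and you treat it as a reduction to ``lower-order terms.'' For $p \ne 2$ the $p$-energy $\int_\Omega |\nabla(sv + tw_\eps)|^p\,dx$ does not split, and the cross terms between the bubble and $v$ are not obviously $\o(1)$ with the required uniformity (Brezis--Lieb gives pointwise splitting of the $L^{p^\ast}$ term for fixed $s, t, v$, but uniformity in $(s,t,v)$ is not automatic). The paper sidesteps the entire issue: it does \emph{not} use the set from Theorem~\ref{Theorem 4} directly, but instead first iterates the map $K$ to get a compact symmetric $C_0 \subset \Psi^{\lambda_k}$ of index $k$ whose elements are bounded in $L^\infty(\Omega) \cap C^{1,\alpha}_\loc(\Omega)$, then multiplies every $u \in C_0$ by a cutoff $\eta(|x|/\delta)$ vanishing on $B_{3\delta/4}(0)$ and renormalizes. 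Lemma~\ref{Lemma 2} uses the $C^1$ bounds to show $\Psi(v) \le \lambda_k + a_4\delta^{N-p} \le \lambda$ for small $\delta$ (this is where the strict gap $\lambda_k < \lambda$ — i.e.\ $\lambda \notin \sigma(-\Delta_p)$ — is spent), and the monotonicity of the index together with $C \subset \M \setminus \Psi_{\lambda_{k+1}}$ keeps $i(C) = k$. The bubble $w_0$ is then supported in $\closure{B_{\delta/2}(0)}$, disjoint from the supports of all $v \in C$, so that $E(sv + tw_0) = E(sv) + E(tw_0)$ \emph{exactly} — no splitting lemma needed. If you want to avoid the cutoff construction you would need to actually prove the uniform splitting you invoke, which is a nontrivial lemma in its own right for the $p$-Laplacian.
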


\begin{remark}
When $\mu = 0$, one nontrivial solution was obtained in Garc{\'{\i}}a Azorero and Peral Alonso \cite{MR912211}, Egnell \cite{MR956567}, Guedda and V{\'e}ron \cite{MR1009077}, Arioli and Gazzola \cite{MR1741848}, and Degiovanni and Lancelotti \cite{MR2514055}.
\end{remark}

\hspace{-1pt} As another application of Theorem \ref{Theorem 5}, consider the critical fractional $p$-Laplacian problem
\begin{equation} \label{35}
\left\{\begin{aligned}
(- \Delta)_p^s\, u & = \lambda\, |u|^{p-2}\, u + \mu f(x,u) + |u|^{p_s^\ast - 2}\, u && \text{in } \Omega\\[10pt]
u & = 0 && \text{in } \R^N \setminus \Omega,
\end{aligned}\right.
\end{equation}
where $\Omega$ is a bounded domain in $\R^N$ with Lipschitz boundary, $s \in (0,1)$, $1 < p < N/s$, $(- \Delta)_p^s$ is the fractional $p$-Laplacian operator defined on smooth functions by
\[
(- \Delta)_p^s\, u(x) = 2 \lim_{\eps \searrow 0} \int_{\R^N \setminus B_\eps(x)} \frac{|u(x) - u(y)|^{p-2}\, (u(x) - u(y))}{|x - y|^{N+sp}}\, dy, \quad x \in \R^N,
\]
$p_s^\ast = Np/(N - sp)$ is the fractional critical Sobolev exponent, $\lambda, \mu > 0$ are parameters, and $f$ is a Carath\'{e}odory function on $\Omega \times \R$ satisfying \eqref{18}--\eqref{27} for some $1 < \sigma < p < r < p_s^\ast$. Let $\pnorm{\cdot}$ denote the norm in $L^p(\R^N)$, let
\[
[u]_{s,\,p} = \left(\int_{\R^{2N}} \frac{|u(x) - u(y)|^p}{|x - y|^{N+sp}}\, dx dy\right)^{1/p}
\]
be the Gagliardo seminorm of a measurable function $u : \R^N \to \R$, and let
\[
W^{s,\,p}(\R^N) = \set{u \in L^p(\R^N) : [u]_{s,\,p} < \infty}
\]
be the fractional Sobolev space endowed with the norm
\[
\norm[s,\,p]{u} = \left(\pnorm{u}^p + [u]_{s,\,p}^p\right)^{1/p}.
\]
We work in the closed linear subspace
\[
W^{s,\,p}_0(\Omega) = \set{u \in W^{s,\,p}(\R^N) : u = 0 \text{ a.e.\! in } \R^N \setminus \Omega},
\]
equivalently renormed by setting $\norm{\cdot} = [\cdot]_{s,\,p}$. Solutions of problem \eqref{35} coincide with critical points of the functional
\begin{multline*}
E(u) = \frac{1}{p} \int_{\R^{2N}} \frac{|u(x) - u(y)|^p}{|x - y|^{N+sp}}\, dx dy - \frac{\lambda}{p} \int_\Omega |u|^p\, dx - \mu \int_\Omega F(x,u)\, dx - \frac{1}{p_s^\ast} \int_\Omega |u|^{p_s^\ast}\, dx,\\[7.5pt]
u \in W^{s,\,p}_0(\Omega).
\end{multline*}
Denote by $\sigma((- \Delta)_p^s)$ the Dirichlet spectrum of $(- \Delta)_p^s$ in $\Omega$, consisting of those $\lambda \in \R$ for which the problem
\[
\left\{\begin{aligned}
(- \Delta)_p^s\, u & = \lambda\, |u|^{p-2}\, u && \text{in } \Omega\\[10pt]
u & = 0 && \text{in } \R^N \setminus \Omega
\end{aligned}\right.
\]
has a nontrivial solution. Let
\[
\dot{W}^{s,\,p}(\R^N) = \set{u \in L^{p_s^\ast}(\R^N) : [u]_{s,\,p} < \infty}
\]
endowed with the norm $\norm{\cdot}$, and let
\begin{equation} \label{36}
S = \inf_{u \in \dot{W}^{s,\,p}(\R^N) \setminus \set{0}}\, \frac{\dint_{\R^{2N}} \frac{|u(x) - u(y)|^p}{|x - y|^{N+sp}}\, dx dy}{\left(\dint_{\R^N} |u|^{p_s^\ast}\, dx\right)^{p/p_s^\ast}}
\end{equation}
be the best fractional Sobolev constant. We will prove the following multiplicity result in Section \ref{Section 5}.

\begin{theorem} \label{Theorem 7}
Let $p > 1$, let $N > sp^2$, let $\lambda > 0$ with $\lambda \notin \sigma((- \Delta)_p^s)$, and let $f$ be a Carath\'{e}odory function on $\Omega \times \R$ satisfying \eqref{18}--\eqref{27} for some $1 < \sigma < p < r < p_s^\ast$. Then $\exists\, \mu_0, \kappa > 0$ such that problem \eqref{35} has two nontrivial solutions $u_1, u_2$ with
\[
E(u_1) < 0 < E(u_2) < \frac{s}{N}\, S^{N/sp} - \kappa \mu
\]
for $0 < \mu < \mu_0$.
\end{theorem}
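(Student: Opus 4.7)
The plan is to apply Theorem~\ref{Theorem 5} with $W = W^{s,\,p}_0(\Omega)$ and the natural fractional analogs of the operators used for problem~\eqref{3}, namely
\[
\dualp{\Ap[u]}{v} = \int_{\R^{2N}} \frac{|u(x)-u(y)|^{p-2}(u(x)-u(y))(v(x)-v(y))}{|x-y|^{N+sp}}\,dxdy, \qquad \dualp{\Bp[u]}{v} = \int_\Omega |u|^{p-2}uv\,dx,
\]
together with $\dualp{f(u)}{v} = \int_\Omega f(x,u)v\,dx$ and $\dualp{g(u)}{v} = \int_\Omega |u|^{p_s^\ast - 2}uv\,dx$. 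The verifications of $(A_1)$--$(G_4)$ are routine: $(A_1)$--$(A_2)$ follow from a Clarkson-type inequality for the Gagliardo form, $(B_1)$--$(B_3)$ from the compactness of the embedding $W^{s,\,p}_0(\Omega) \hookrightarrow L^p(\Omega)$, and $(F_1)$--$(G_4)$ from \eqref{18}--\eqref{27} together with the explicit form of $g$.

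Next I would establish a local \PS{} threshold of the form $c_\mu := \tfrac{s}{N}\,S^{N/sp} - \kappa\mu$ for a suitable $\kappa > 0$. Boundedness of PS sequences follows from the usual computation with $E - \tfrac{1}{p_s^\ast}\,\dualp{E'}{u}$, the $\mu f$ contribution being absorbed via \eqref{27} and interpolation. Concentration-compactness for the fractional $p$-Laplacian then forces the $p_s^\ast$-defect measure either to vanish, which gives strong convergence, or to carry mass at least $S^{N/sp}$ at finitely many blow-up points, each contributing at least $\tfrac{s}{N}\,S^{N/sp}$ to the limiting energy. Imposing $c < c_\mu$ and choosing $\kappa$ to absorb the $O(\mu)$ error coming from $F$ rules out concentration.

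The crux is verifying \eqref{23} or \eqref{24}. Since $\lambda\notin\sigma((-\Delta)_p^s)$, pick $k \ge 0$ maximal with $\lambda_k \le \lambda$; then $\lambda_k < \lambda < \lambda_{k+1}$ strictly, so when $k \ge 1$ Theorem~\ref{Theorem 4} furnishes a compact symmetric set $C \subset \Psi^{\lambda_k} \subset \Psi^\lambda$ with $i(C) = k$, while $k = 0$ corresponds to the case $\lambda < \lambda_1$ and only requires $w_0$. As test direction I would take $w_0 = \closure{u}_\eps / \norm{\closure{u}_\eps}$, where $\closure{u}_\eps$ is a smooth cutoff, localized near an interior point $x_0 \in \Omega$, of a minimizer of \eqref{36} rescaled at scale $\eps$. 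The standard asymptotic estimates
\[
[\closure{u}_\eps]_{s,\,p}^p \le S^{N/sp} + O(\eps^{(N-sp)/(p-1)}), \quad \int_\Omega |\closure{u}_\eps|^{p_s^\ast} dx \ge S^{N/sp} - O(\eps^{N/(p-1)}), \quad \int_\Omega |\closure{u}_\eps|^p dx \ge c\,\eps^{sp},
\]
valid for $N > sp^2$, combined with uniform $L^\infty$-bounds on $C$ (from eigenfunction regularity for $(-\Delta)_p^s$) and the inequality $\dualp{\Bp[v]}{v} \ge \lambda_k^{-1}\,\norm{v}^p$ on $C$, yield after some algebra an estimate of the form
\[
\sup_{v \in C,\, s,t \ge 0}\, E(sv + tw_0) \le \tfrac{s}{N}\,S^{N/sp} - c_1\,\eps^{sp} + c_2\,\eps^{(N-sp)/(p-1)} + c_3\,\mu,
\]
in which the first error dominates the second precisely because $N > sp^2$ forces $sp < (N-sp)/(p-1)$. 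Fixing $\eps$ small and then $\mu$ small establishes \eqref{24}; the case $k = 0$ is the same computation without $v$ and gives \eqref{23}.

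The main obstacle is the energy expansion itself: because the Gagliardo seminorm does not split additively for functions with overlapping supports and $\closure{u}_\eps$ has polynomial decay on all of $\R^N$, the cross terms in $[sv+tw_0]_{s,\,p}^p$ and in $\int_\Omega |sv+tw_0|^{p_s^\ast}dx$ must be bounded carefully, using the uniform boundedness of $C$, the eigenvalue equation on $C$, and the precise profile of $\closure{u}_\eps$. Once \eqref{23} or \eqref{24} is in hand, Theorem~\ref{Theorem 5} immediately delivers the two nontrivial solutions $u_1, u_2$ with $E(u_1) < 0 < E(u_2) < c_\mu = \tfrac{s}{N}\,S^{N/sp} - \kappa\mu$, which is the stated conclusion.
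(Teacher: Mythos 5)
Your overall strategy (apply Theorem~\ref{Theorem 5} with the fractional operators, establish the PS threshold $c_\mu = \tfrac{s}{N}S^{N/sp} - \kappa\mu$, then verify \eqref{24} using asymptotics of Sobolev minimizers) matches the paper. However, there are two substantive gaps in the energy estimate step, which you correctly identify as "the crux."

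First, you take $C$ directly from Theorem~\ref{Theorem 4}, whose functions are in general supported throughout $\Omega$, and then face the problem that $v\in C$ and $w_0$ have overlapping supports. You acknowledge this and propose to control the cross terms via $L^\infty$-bounds on $C$ and the eigenvalue equation, but this is far from routine: with overlapping supports, none of the local integrals $\int|sv+tw_0|^p$, $\int|sv+tw_0|^{p_s^\ast}$, $\int F(x,sv+tw_0)$ split additively (for $p\neq 2$ there is no clean pointwise expansion of $|a+b|^q$), and in particular the crucial lower bound on $\int|sv+tw_0|^{p_s^\ast}$ — which drives the subtraction of the quadratic form maximum below $\tfrac{s}{N}S^{N/sp}$ — is not available. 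The paper avoids all of this by modifying $C_0$ (from Theorem~\ref{Theorem 4}) via an odd continuous cutoff $u\mapsto\pi_\M(\eta(\cdot/\delta)\,u)$ that annihilates functions near $0$, producing a new compact set $C\subset\Psi^\lambda$ with $i(C)=k$ whose members are supported in $\Omega\setminus B_{2\theta\delta}(0)$, disjoint (with a gap) from the support of $w_0\subset\closure{B_{\theta\delta}(0)}$. Only the nonlocal cross term in the Gagliardo seminorm then survives, and this is exactly what Mosconi et al.\ control. Second, you propose $w_0$ as a normalized \emph{smooth cutoff} $\closure{u}_\eps$ of the rescaled minimizer, but the estimates you cite — with the sharp exponent $(N-sp)/(p-1)$, which is exactly what is needed under $N>sp^2$ — are proved in Mosconi et al.\ for a specific radial \emph{level truncation} $u_{\eps,\delta}=G_{\eps,\delta}(U_\eps)$ of the (non-explicit) optimizer, not for a cutoff multiplication. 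In the nonlocal setting a naive cutoff does not obviously reproduce these estimates (and Mosconi et al.\ explicitly adopt the truncation precisely for this reason), so invoking them for $\closure{u}_\eps$ as "standard" is an unjustified leap. Both issues are fixable, but as written the proposal omits the two constructions (the disjoint-support modification of $C$ and the level-truncated bubble) that make the energy estimate go through under the stated hypothesis $N>sp^2$.
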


\begin{remark}
When $\mu = 0$, one nontrivial solution was obtained in Mosconi et al.\! \cite{MR3530213}.
\end{remark}

\section{Proof of Theorem \ref{Theorem 1}} \label{Section 2}

In this section we prove Theorem \ref{Theorem 1}. We begin by recalling the definition and some properties of the $\Z_2$-cohomological index of Fadell and Rabinowitz \cite{MR0478189}.

\begin{definition} \label{Definition 1}
Let $W$ be a Banach space and let $\A$ denote the class of symmetric subsets of $W \setminus \set{0}$. For $A \in \A$, let $\overline{A} = A/\Z_2$ be the quotient space of $A$ with each $u$ and $-u$ identified, let $f : \overline{A} \to \RP^\infty$ be the classifying map of $\overline{A}$, and let $f^\ast : H^\ast(\RP^\infty) \to H^\ast(\overline{A})$ be the induced homomorphism of the Alexander-Spanier cohomology rings. The cohomological index of $A$ is defined by
\[
i(A) = \begin{cases}
0 & \text{if } A = \emptyset\\[7.5pt]
\sup \set{m \ge 1 : f^\ast(\omega^{m-1}) \ne 0} & \text{if } A \ne \emptyset,
\end{cases}
\]
where $\omega \in H^1(\RP^\infty)$ is the generator of the polynomial ring $H^\ast(\RP^\infty) = \Z_2[\omega]$.
\end{definition}

\begin{example}
The classifying map of the unit sphere $S^{m-1}$ in $\R^m,\, m \ge 1$ is the inclusion $\RP^{m-1} \incl \RP^\infty$, which induces isomorphisms on the cohomology groups $H^q$ for $q \le m - 1$, so $i(S^{m-1}) = m$.
\end{example}

The following proposition summarizes the basic properties of this index.

\begin{proposition}[\cite{MR0478189}] \label{Proposition 3}
The index $i : \A \to \N \cup \set{0,\infty}$ has the following properties:
\begin{properties}{i}
\item Definiteness: $i(A) = 0$ if and only if $A = \emptyset$.
\item Monotonicity: If there is an odd continuous map from $A$ to $B$ (in particular, if $A \subset B$), then $i(A) \le i(B)$. Thus, equality holds when the map is an odd homeomorphism.
\item Dimension: $i(A) \le \dim W$.
\item Continuity: If $A$ is closed, then there is a closed neighborhood $N \in \A$ of $A$ such that $i(N) = i(A)$. When $A$ is compact, $N$ may be chosen to be a $\delta$-neighborhood $N_\delta(A) = \set{u \in W : \dist{u}{A} \le \delta}$.
\item Subadditivity: If $A$ and $B$ are closed, then $i(A \cup B) \le i(A) + i(B)$.
\item Stability: If $SA$ is the suspension of $A \ne \emptyset$, obtained as the quotient space of $A \times [-1,1]$ with $A \times \set{1}$ and $A \times \set{-1}$ collapsed to different points, then $i(SA) = i(A) + 1$.
\item Piercing property: If $A$, $A_0$ and $A_1$ are closed, and $\varphi : A \times [0,1] \to A_0 \cup A_1$ is a continuous map such that $\varphi(-u,t) = - \varphi(u,t)$ for all $(u,t) \in A \times [0,1]$, $\varphi(A \times [0,1])$ is closed, $\varphi(A \times \set{0}) \subset A_0$ and $\varphi(A \times \set{1}) \subset A_1$, then $i(\varphi(A \times [0,1]) \cap A_0 \cap A_1) \ge i(A)$.
\item Neighborhood of zero: If $U$ is a bounded closed symmetric neighborhood of $0$, then $i(\bdry{U}) = \dim W$.
\end{properties}
\end{proposition}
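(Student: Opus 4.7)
The plan is to derive the eight properties from the definition by combining the naturality of the classifying map for free $\Z_2$-actions, the polynomial ring structure $H^\ast(\RP^\infty) = \Z_2[\omega]$, and the good behavior of Alexander-Spanier cohomology under closed subsets. I would group the items by difficulty: the formal properties $(i_1), (i_2), (i_3), (i_8)$ first; continuity $(i_4)$ next; then subadditivity and stability $(i_5), (i_6)$; and finally the piercing property $(i_7)$ as a corollary.

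The formal group is essentially a bookkeeping exercise. For $(i_1)$, any ring homomorphism sends $\omega^0 = 1$ to $1$, so $i(A) \ge 1$ whenever $A \ne \emptyset$, while $i(\emptyset) = 0$ is built in. For $(i_2)$, an odd continuous $\varphi\colon A \to B$ descends to $\overline\varphi\colon A/\Z_2 \to B/\Z_2$; since any two classifying maps of a principal $\Z_2$-bundle are homotopic, $f_B \comp \overline\varphi \simeq f_A$, giving $f_A^\ast = \overline\varphi^\ast \comp f_B^\ast$, which transports the vanishing of $\omega^{m-1}$ from $B$ to $A$. Property $(i_3)$ follows from $(i_2)$ by radial retraction of $W \setminus \set{0}$ onto the unit sphere $S$, whose quotient is $\RP^{\dim W - 1}$ in the finite-dimensional case. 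Property $(i_8)$ is immediate: the radial projection $\bdry{U} \to S$ is an odd homeomorphism, so $i(\bdry{U}) = i(S) = \dim W$.

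For continuity $(i_4)$, I would invoke the tautness property of Alexander-Spanier cohomology: the cohomology of a closed subspace is the direct limit of the cohomologies of its open neighborhoods. If $f_A^\ast(\omega^{i(A)}) = 0$, the class already vanishes on the quotient of some open neighborhood $U$ of $A$; taking $N$ closed in $U$ and applying $(i_2)$ gives $i(N) = i(A)$, with a uniform $\delta$-tube available in the compact case. The central input is subadditivity $(i_5)$. Set $a = i(A)$, $b = i(B)$, $A' = A \cup B$. The vanishing of $\omega^a$ on $\overline A$ produces, via the long exact sequence of the pair $(\overline{A'},\overline A)$, a relative lift $\alpha \in H^a(\overline{A'}, \overline A)$; likewise a lift $\beta \in H^b(\overline{A'}, \overline B)$ of $\omega^b$. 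The cup product $\alpha \smile \beta$ lives in $H^{a+b}(\overline{A'}, \overline A \cup \overline B) = H^{a+b}(\overline{A'}, \overline{A'}) = 0$ and maps to $\omega^{a+b}$ under the forgetful map into absolute cohomology, so $\omega^{a+b} = 0$ on $\overline{A'}$. Stability $(i_6)$ uses the suspension isomorphism: with the antipodal action $(u,t)\mapsto(-u,-t)$, $\overline{SA}$ is a double mapping cone whose classifying map pulls back $\omega$ to a class one degree higher than on $\overline A$, raising the index by one.

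The piercing property $(i_7)$ is obtained by applying $(i_5)$ to the closed cover of $\varphi(A \times [0,1])$ by its intersections with $A_0$ and $A_1$, together with $(i_6)$: the equivariant map $\varphi$, combined with the collapse $A \times \set{0} \to A_0$, $A \times \set{1} \to A_1$, realizes an odd map out of $SA$ that forces a class of height $i(A) + 1$ to concentrate on $\varphi(A \times [0,1]) \cap A_0 \cap A_1$. The main obstacle is $(i_5)$: the cup-product manipulation is formal once set up, but it has to be performed in Alexander-Spanier cohomology so as to remain compatible with $(i_4)$, and one needs the identification $\overline A \cup \overline B = \overline{A'}$ at the level of closed pairs in order to conclude that $H^{a+b}(\overline{A'}, \overline A \cup \overline B) = 0$. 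Once $(i_5)$ and $(i_6)$ are in hand the remaining items are routine.
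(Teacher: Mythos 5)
The paper does not prove Proposition \ref{Proposition 3}; it cites it directly from Fadell and Rabinowitz \cite{MR0478189}, so there is no in-paper proof to compare against, and your sketch has to stand on its own. The formal items $(i_1)$--$(i_3)$, the tautness argument for $(i_4)$, and in particular your cup-product derivation of subadditivity $(i_5)$ -- lifting $\omega^{a}$ and $\omega^{b}$ to relative classes in $H^{a}(\overline{A\cup B},\overline A)$ and $H^{b}(\overline{A\cup B},\overline B)$, cupping into $H^{a+b}(\overline{A\cup B},\overline A\cup\overline B)$, and using $\overline A\cup\overline B=\overline{A\cup B}$ -- are all sound and are exactly the standard arguments.

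The genuine gap is $(i_7)$. You claim to obtain it from $(i_5)$ and $(i_6)$ via ``an odd map out of $SA$,'' but this cannot work: the $\Z_2$-action on $A\times[0,1]$ relevant to the piercing property is $(u,t)\mapsto(-u,t)$, which fixes $t$, whereas $SA$ carries the antipodal action $(u,t)\mapsto(-u,-t)$; moreover ``collapsing $A\times\set{0}$ to $A_0$'' is not an operation -- $A_0$ is a set, not a point -- so $\varphi$ does not induce any map out of $SA$. Subadditivity and stability by themselves give no lower bound on $i(\varphi(A\times[0,1])\cap A_0\cap A_1)$. The actual proof is an excision argument of a different nature: putting $C_j=\varphi^{-1}(A_j)$, one has $C_0\cup C_1=A\times[0,1]$, $A\times\set{0}\subset C_0$, $A\times\set{1}\subset C_1$, and $\varphi(C_0\cap C_1)$ landing in the target set, so by monotonicity it suffices to show $i(C_0\cap C_1)\ge i(A)$. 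If instead $\omega^{i(A)-1}$ vanished on $\overline{C_0}\cap\overline{C_1}$, the pair sequence of $(\overline{C_0},\overline{C_0}\cap\overline{C_1})$ together with the excision isomorphism $H^{i(A)-1}(\overline A\times[0,1],\overline{C_1})\cong H^{i(A)-1}(\overline{C_0},\overline{C_0}\cap\overline{C_1})$ would produce a class $\gamma\in H^{i(A)-1}(\overline A\times[0,1])$ whose restrictions to the slices $\overline A\times\set{0}$ and $\overline A\times\set{1}$ are $f^\ast(\omega^{i(A)-1})\ne 0$ and $0$ respectively, contradicting that both slice inclusions are homotopy equivalences inducing the same isomorphism. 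Two secondary issues: in $(i_8)$ the radial projection $\bdry U\to S$ is odd and continuous but not in general a homeomorphism (only when $U$ is starshaped), so your argument yields $i(\bdry U)\le\dim W$ but not the reverse inequality, which needs its own argument; and your account of $(i_6)$ is too loose -- $\overline{SA}$ is the mapping cone of the double cover $q\colon A\to\overline A$, and one must read the shift of the top nonvanishing power of $\omega$ off the resulting cofiber (or Gysin) long exact sequence rather than appeal to $\omega$ ``pulling back to a class one degree higher.''
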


Next we recall the definition of a cohomological linking.

\begin{definition}
Let $A$ and $B$ be disjoint nonempty subsets of a Banach space $W$. We say that $A$ links $B$ cohomologically in dimension $k \ge 0$ if the inclusion $\iota : A \incl W \setminus B$ induces a nontrivial homomorphism $\iota^\ast : \widetilde{H}^k(W \setminus B) \to \widetilde{H}^k(A)$ between reduced cohomology groups in dimension $k$.
\end{definition}

\begin{example} \label{Example 1}
Let $\M$ be as in the introduction and let $A_0$ and $B_0$ be disjoint nonempty closed symmetric subsets of $\M$ such that
\[
i(A_0) = i(\M \setminus B_0) = k < \infty.
\]
Then for any $R > 0$, the set $\set{Rv : v \in A_0}$ links the set $\set{tw : w \in B_0,\, t \ge 0}$ cohomologically in dimension $k - 1$ (see Perera et al.\! \cite[Proposition 3.26]{MR2640827}).
\end{example}

In preparation for the proof of Theorem \ref{Theorem 1}, we prove the following linking result, which is of independent interest.

\begin{theorem} \label{Theorem 3}
Let $A_0$ and $B_0$ be disjoint closed symmetric subsets of $\M$ such that
\begin{equation} \label{1}
i(A_0) = i(\M \setminus B_0) = k < \infty.
\end{equation}
Let $w_0 \in \M \setminus A_0$ and $0 \le r < \rho < R$. Set $A_1 = \set{\pi_\M((1 - s)\, v + sw_0) : v \in A_0,\, 0 \le s \le 1}$ and
\[
A = \set{ru : u \in A_1} \cup \set{tv : v \in A_0,\, r \le t \le R} \cup \set{Ru : u \in A_1}, \quad B = \set{\rho w : w \in B_0}.
\]
Then $A$ links $B$ cohomologically in dimension $k$.
\end{theorem}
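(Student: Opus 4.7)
The strategy is to lift the $(k-1)$-dimensional linking of Example \ref{Example 1} to a $k$-dimensional linking of $A$ with $B$ via a Mayer--Vietoris argument, built on matching decompositions of $A$ and of $W \setminus B$ at the separating radius $\rho$.

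I would first decompose $A = A^- \cup A^+$, where
\[
A^- = \set{ru : u \in A_1} \cup \set{tv : v \in A_0,\, r \le t \le \rho}, \quad A^+ = \set{tv : v \in A_0,\, \rho \le t \le R} \cup \set{Ru : u \in A_1}.
\]
Both pieces are contractible: $A^-$ first radially retracts onto $rA_1$ by pushing $t \to r$, and $rA_1 \isom A_1$ then contracts to $\set{w_0}$ via $H(u,\tau) = \pi_\M((1-\tau)u + \tau w_0)$; this homotopy stays in $A_1$ by a short convex-combination computation and is well defined because $w_0 \in \M \setminus A_0$ forces $-w_0 \notin A_1$. The same arguments work symmetrically for $A^+$. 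Using $r < \rho < R$ together with $A_0 \cap B_0 = \emptyset$, the radial decomposition $u = r(u)\, \pi_\M(u)$ gives $A^- \cap A^+ = \rho A_0 \isom A_0$, so Mayer--Vietoris yields an isomorphism $\delta_A : \widetilde{H}^{k-1}(A_0) \isom \widetilde{H}^k(A)$.

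Next I would decompose $W \setminus B = V^- \cup V^+$ with
\[
V^- = W \setminus \set{tw : w \in B_0,\, t \ge \rho}, \qquad V^+ = W \setminus \set{tw : w \in B_0,\, 0 \le t \le \rho}.
\]
The set $V^-$ is star-shaped about $0$, and $V^+$ retracts radially outward onto $2\rho\, \M$, which is contractible in the infinite-dimensional setting under consideration. Their intersection $V^- \cap V^+$ is $W$ minus the positive ray on $B_0$ appearing in Example \ref{Example 1}, and $\pi_\M$ identifies it up to homotopy with $\M \setminus B_0$; Mayer--Vietoris again gives an isomorphism $\delta_V : \widetilde{H}^{k-1}(\M \setminus B_0) \isom \widetilde{H}^k(W \setminus B)$. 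The compatibility $A^\pm \subset V^\pm$ is immediate from $r < \rho < R$ and $A_0 \cap B_0 = \emptyset$.

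By naturality of the Mayer--Vietoris connecting homomorphism, $\iota : A \incl W \setminus B$ then fits into a commutative square with horizontal isomorphisms $\delta_A, \delta_V$, whose left-hand vertical map reduces, after composing with the homotopy equivalences above, to the pullback $\iota_0^\ast : \widetilde{H}^{k-1}(\M \setminus B_0) \to \widetilde{H}^{k-1}(A_0)$ induced by the inclusion $A_0 \incl \M \setminus B_0$. Example \ref{Example 1} precisely asserts that $\iota_0^\ast$ is nontrivial, so $\iota^\ast$ is nontrivial as well, yielding the desired linking. The principal technical care lies in making the Mayer--Vietoris computation rigorous in Alexander--Spanier cohomology, which is formulated for open covers: I would replace $A^\pm$ and $V^\pm$ by suitable open thickenings sharing the same deformation-retract types. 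A secondary concern is the finite-dimensional case, in which $V^+$ has the cohomology of a sphere rather than of a point; there I would instead invoke the long exact sequence of the pair $(W, W \setminus B)$ together with excision near $B$ to reach the same conclusion.
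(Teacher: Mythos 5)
Your proof is correct, and it takes a genuinely different route from the paper's. The paper does not use Mayer--Vietoris: it fixes the asymmetric decomposition $A_2 = \set{ru : u \in A_1} \cup \set{tv : v \in A_0,\, r \le t \le R}$, $A_3 = \set{Rv : v \in A_0}$, $A_4 = \set{Ru : u \in A_1}$, runs the long exact sequence of the pair $(A_2,A_3) \incl (W \setminus B_2, W \setminus B_1)$ to transfer the nontriviality from Example \ref{Example 1}, applies excision to move from $(W \setminus B_2, W \setminus B_1)$ to $(W \setminus B, W \setminus B^\ast)$, and then runs the long exact sequence of $(A,A_4) \incl (W \setminus B, W \setminus B^\ast)$ to conclude, using only the contractibility of $A_2$ and $A_4$. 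Your symmetric split of $A$ at radius $\rho$ into $A^- \cup A^+$ with $A^- \cap A^+ = \rho A_0$ is new, whereas on the other side $V^\pm = W \setminus B_2$, $W \setminus B^\ast$ are the same sets the paper works with, just organized through the Mayer--Vietoris boundary map rather than through two exact sequences of pairs joined by excision. Your version is more compact and conceptually transparent, collapsing the three diagrams to one naturality square.

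There are two trade-offs, both of which you flag. First, Mayer--Vietoris in Alexander--Spanier cohomology needs an excisive couple: on the $W \setminus B$ side you are fine since $V^\pm$ are open in $W$, but on the $A$ side $A^\pm$ are closed, so the open-thickening step (or a direct verification that the couple is excisive) is genuinely needed, and you should spell it out if this were to be written up. Second, and more substantively, your Mayer--Vietoris isomorphism $\delta_V$ relies on $V^+$ being contractible, which only holds when $W$ is infinite dimensional (it is a deformation retract of $2\rho\,\M \isom S$); the paper's argument never needs any $W \setminus (\cdot)$ set to be contractible and so works in all dimensions without modification. Your proposed finite-dimensional patch via the pair $(W, W\setminus B)$ would have to be carried out, and at that point you would in effect be reconstructing the paper's excision-plus-pairs argument. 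So: same reduction to Example \ref{Example 1}, different and somewhat slicker bookkeeping in the infinite-dimensional case, at the cost of an extra step in finite dimensions that the paper's route avoids.
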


\begin{proof}
First we note that $A_1$ is contractible. Indeed, the mapping
\[
A_1 \times [0,1] \to A_1, \quad (u,t) \mapsto \pi_\M((1 - t)\, u + tw_0)
\]
is a contraction of $A_1$ to $w_0$.

Let
\begin{gather*}
A_2 = \set{ru : u \in A_1} \cup \set{tv : v \in A_0,\, r \le t \le R}, \quad A_3 = \set{Rv : v \in A_0},\\[10pt]
B_1 = \set{tw : w \in B_0,\, t \ge 0}, \quad B_2 = \set{tw : w \in B_0,\, t \ge \rho}.
\end{gather*}
The set $A_2$ is contractible. Indeed, the mapping
\[
A_2 \times [0,1] \to A_2, \quad (u,t) \mapsto (1 - t)\, u + tr\, \pi_\M(u)
\]
is a strong deformation retraction of $A_2$ onto $\set{ru : u \in A_1}$, which is homeomorphic to $A_1$ and hence contractible. Consider the commutative diagram
\[
\begin{CD}
\widetilde{H}^{k-1}(E \setminus B_2) @>>> \widetilde{H}^{k-1}(E \setminus B_1) @>>> H^k(E \setminus B_2,E \setminus B_1) @>>> \widetilde{H}^k(E \setminus B_2)\\
@VVV @VV{\iota_1^\ast}V @VV{\iota_2^\ast}V @VVV\\
\widetilde{H}^{k-1}(A_2) @>>> \widetilde{H}^{k-1}(A_3) @>\delta>> H^k(A_2,A_3) @>>> \widetilde{H}^k(A_2)
\end{CD}
\]
where the rows come from the exact sequences of the pairs $(A_2,A_3) \incl (E \setminus B_2,E \setminus B_1)$. By Example \ref{Example 1}, $A_3$ links $B_1$ cohomologically in dimension $k - 1$ and hence $\iota_1^\ast \ne 0$. Since $A_2$ is contractible, $\widetilde{H}^\ast(A_2) = 0$ and hence $\delta$ is an isomorphism by the exactness of the bottom row. So it follows from the commutativity of the middle square that $\iota_2^\ast \ne 0$.

Next let
\[
A_4 = \set{Ru : u \in A_1},
\]
and consider the commutative diagram
\[
\begin{CD}
H^k(E \setminus B,E \setminus B^\ast) @>\isom>> H^k(E \setminus B_2,E \setminus B_1)\\
@VV{\iota_3^\ast}V @VV{\iota_2^\ast}V\\
H^k(A,A_4) @>>> H^k(A_2,A_3)
\end{CD}
\]
induced by inclusions. The top arrow is an isomorphism by the excision property since $\set{tw : w \in B_0,\, t > \rho}$ is a closed subset of $E \setminus B$ contained in the open subset $E \setminus B^\ast$. Since $\iota_2^\ast \ne 0$, it follows that $\iota_3^\ast \ne 0$.

Finally consider the commutative diagram
\[
\begin{CD}
\widetilde{H}^{k-1}(E \setminus B^\ast) @>>> H^k(E \setminus B,E \setminus B^\ast) @>>> \widetilde{H}^k(E \setminus B) @>>> \widetilde{H}^k(E \setminus B^\ast)\\
@VVV @VV{\iota_3^\ast}V @VV{\iota^\ast}V @VVV\\
\widetilde{H}^{k-1}(A_4) @>>> H^k(A,A_4) @>j^\ast>> \widetilde{H}^k(A) @>>>\widetilde{H}^k(A_4)
\end{CD}
\]
where the rows come from the exact sequences of the pairs $(A,A_4) \incl (E \setminus B,E \setminus B^\ast)$. Since $A_4$ is homeomorphic to $A_1$ and hence contractible, $\widetilde{H}^\ast(A_4) = 0$. So $j^\ast$ is an isomorphism by the exactness of the bottom row. Since $\iota_3^\ast \ne 0$, it follows from the commutativity of the middle square that $\iota^\ast \ne 0$.
\end{proof}

Before proceeding to the proof of Theorem \ref{Theorem 1}, we recall the definition of critical groups.

\begin{definition} \label{Definition 2}
Let $E$ be a $C^1$-functional on a Banach space $W$ and let $u_0$ be an isolated critical point of $E$. The critical groups of $E$ at $u_0$ are defined by
\[
C^q(E,u_0) = H^q(E^c,E^c \setminus \set{u_0}), \quad q \ge 0,
\]
where $c = E(u_0)$ is the corresponding critical value and $E^c = \set{u \in W : E(u) \le c}$ is the sublevel set.
\end{definition}

If $E$ satisfies the \PS{c} condition for all $c \in [a,b]$ and $H^q(E^b,E^a) \ne 0$ for some $q \ge 0$, then $E$ has a critical point $u_0$ with $a \le E(u_0) \le b$. If, in addition, $a$ and $b$ are regular values and $E$ has only a finite number of critical points with the corresponding critical values in $(a,b)$, then $u_0$ can be chosen so that $C^q(E,u_0) \ne 0$ (see, e.g., Perera et al.\! \cite[Proposition 3.13]{MR2640827}). We will make use of this fact in the proof of Theorem \ref{Theorem 1}.

\begin{proof}[Proof of Theorem \ref{Theorem 1}]
For the sake of simplicity we only consider the case where $W$ is infinite dimensional. Since $B^\ast \cap A$ and $B \cap A^\ast$ are nonempty, $\inf E(B^\ast) \le \sup E(A)$ and $\inf E(B) \le \sup E(A^\ast)$. We will show that if $\alpha < \beta < \gamma$ satisfy
\[
a < \alpha < \inf_{B^\ast}\, E, \qquad \sup_A\, E < \beta < \inf_B\, E, \qquad \sup_{A^\ast}\, E < \gamma < b,
\]
then
\[
H^k(E^\beta,E^\alpha) \ne 0, \qquad H^{k+1}(E^\gamma,E^\beta) \ne 0.
\]
Then $E$ has a pair of critical points $u_1, u_2$ with
\[
\alpha \le E(u_1) \le \beta, \qquad \beta \le E(u_2) \le \gamma.
\]
If $\inf E(B^\ast)$ or $\sup E(A)$ is a critical value of $E$, we can take $u_1$ to be at one of those levels. Otherwise, $\alpha$ and $\beta$ can be chosen so that $E$ has no critical values in $[\alpha,\inf E(B^\ast)] \cup [\sup E(A),\beta]$ since $E$ satisfies the \PS{} condition at those levels, and then
\[
\inf_{B^\ast}\, E < E(u_1) < \sup_A\, E.
\]
Similarly, $u_2$ can be chosen to satisfy
\[
\inf_B\, E \le E(u_2) \le \sup_{A^\ast}\, E.
\]
If $E$ has only a finite number of critical points with the corresponding critical values in $(a,b)$, then $\alpha$ and $\beta$ can be chosen so that $E$ has no critical values in $[\alpha,\inf E(B^\ast)) \cup (\sup E(A),\beta]$ and
\[
C^k(E,u_1) \ne 0, \qquad C^{k+1}(E,u_2) \ne 0.
\]

By Theorem \ref{Theorem 3}, $A$ links $B$ cohomologically in dimension $k$ and hence the inclusion $\iota : A \incl W \setminus B$ induces a nontrivial homomorphism $\iota^\ast : \widetilde{H}^k(W \setminus B) \to \widetilde{H}^k(A)$. Since $E < \beta$ on $A$ and $E > \beta$ on $B$, we also have the inclusions $\iota_1 : A \incl E^\beta$ and $\iota_2 : E^\beta \incl W \setminus B$, which induce the commutative diagram
\[
\xymatrix{{\widetilde{H}}^k(W \setminus B) \ar[d]^{\iota_2^\ast} \ar[dr]^{\iota^\ast}\\
{\widetilde{H}}^k(E^\beta) \ar[r]^{\iota_1^\ast} & {\widetilde{H}}^k(A).}
\]
This gives
\[
\iota_1^\ast \iota_2^\ast = \iota^\ast \ne 0,
\]
so both $\iota_1^\ast$ and $\iota_2^\ast$ are nontrivial homomorphisms.

First we show that $H^k(E^\beta,E^\alpha) \ne 0$. Since $E > \alpha$ on $B^\ast$ and $\alpha < \beta$, we have the inclusions $E^\alpha \incl W \setminus B^\ast \incl W \setminus B$ and $E^\alpha \incl E^\beta \incl W \setminus B$, yielding the commutative diagram
\[
\begin{CD}
\widetilde{H}^k(W \setminus B) @>>> \widetilde{H}^k(W \setminus B^\ast)\\
@VV{\iota_2^\ast}V @VVV\\
\widetilde{H}^k(E^\beta) @>i^\ast>> \widetilde{H}^k(E^\alpha).
\end{CD}
\]
The set $W \setminus B^\ast$ is contractible. Indeed, for any $\rho' > \rho$, the mapping
\[
(W \setminus B^\ast) \times [0,1] \to W \setminus B^\ast, \quad (u,t) \mapsto (1 - t)\, u + t \rho'\, \pi_\M(u)
\]
is a strong deformation retraction of $W \setminus B^\ast$ onto $\set{\rho' u : u \in \M}$, which is homeomorphic to $S$ and hence contractible since $W$ is infinite dimensional. So $\widetilde{H}^\ast(W \setminus B^\ast) = 0$ and hence
\[
i^\ast \iota_2^\ast = 0.
\]
Since $\iota_2^\ast$ is nontrivial, this implies that $i^\ast$ is not injective. Now consider the exact sequence of the pair $(E^\beta,E^\alpha)$:
\[
\begin{CD}
\cdots @>\delta>> H^k(E^\beta,E^\alpha) @>j^\ast>> \widetilde{H}^k(E^\beta) @>i^\ast>> \widetilde{H}^k(E^\alpha) @>\delta>> \cdots.
\end{CD}
\]
By exactness,
\[
\im j^\ast = \ker i^\ast \ne 0,
\]
so $H^k(E^\beta,E^\alpha) \ne 0$.

Finally we show that $H^{k+1}(E^\gamma,E^\beta) \ne 0$. Since $E < \gamma$ on $A^\ast$ and $\beta < \gamma$, we have the inclusions $A \incl A^\ast \incl E^\gamma$ and $A \incl E^\beta \incl E^\gamma$, yielding the commutative diagram
\[
\begin{CD}
\widetilde{H}^k(E^\gamma) @>>> \widetilde{H}^k(A^\ast)\\
@VV{i^\ast}V @VVV\\
\widetilde{H}^k(E^\beta) @>\iota_1^\ast>> \widetilde{H}^k(A).
\end{CD}
\]
The set $A^\ast$ is contractible. Indeed, the mapping
\[
A^\ast \times [0,1] \to A^\ast, \quad (u,t) \mapsto (1 - t)\, u + tr\, \pi_\M(u)
\]
is a strong deformation retraction of $A^\ast$ onto $\set{ru : u \in A_1}$, which is homeomorphic to $A_1$ and hence contractible as in the proof of Theorem \ref{Theorem 3}. So $\widetilde{H}^\ast(A^\ast) = 0$ and hence
\[
\iota_1^\ast i^\ast = 0.
\]
Since $\iota_1^\ast$ is nontrivial, this implies that $i^\ast$ is not surjective. Now consider the exact sequence of the pair $(E^\gamma,E^\beta)$:
\[
\begin{CD}
\cdots @>j^\ast>> \widetilde{H}^k(E^\gamma) @>i^\ast>> \widetilde{H}^k(E^\beta) @>\delta>> H^{k+1}(E^\gamma,E^\beta) @>j^\ast>> \cdots.
\end{CD}
\]
By exactness,
\[
\ker \delta = \im i^\ast \ne \widetilde{H}^k(E^\beta),
\]
so $H^{k+1}(E^\gamma,E^\beta) \ne 0$.
\end{proof}

\section{Proofs of Theorems \ref{Theorem 2}, \ref{Theorem 4}, and \ref{Theorem 5}} \label{Section 3}

In this section we prove Theorems \ref{Theorem 2}, \ref{Theorem 4}, and \ref{Theorem 5}. We begin by proving the following proposition, which shows that
\[
E'(u) = \Ap[u] - \lambda \Bp[u] - \mu f(u) - g(u), \quad u \in W
\]
and hence solutions of equation \eqref{4} coincide with critical points of $E$.

\begin{proposition} \label{Proposition 1}
If $h \in C(W,W^\ast)$ is a potential operator and $H \in C^1(W,\R)$ is its potential satisfying $H(0) = 0$, then
\[
H(u) = \int_0^1 \dualp{h(tu)}{u} dt \quad \forall u \in W.
\]
In particular, $H$ is even if $h$ is odd. If $h$ is $(p - 1)$-homogeneous, then
\[
H(u) = \frac{1}{p} \dualp{h(u)}{u} \quad \forall u \in W
\]
and is $p$-homogeneous.
\end{proposition}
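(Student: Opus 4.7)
The plan is to derive the integral formula by applying the one-dimensional fundamental theorem of calculus along the segment from $0$ to $u$, and then read off the symmetry and homogeneity consequences as corollaries.

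First I would fix $u \in W$ and consider the real-valued function $\varphi : [0,1] \to \R$ defined by $\varphi(t) = H(tu)$. Since $H \in C^1(W,\R)$ with $H' = h$, the chain rule gives $\varphi \in C^1([0,1],\R)$ with $\varphi'(t) = \dualp{H'(tu)}{u} = \dualp{h(tu)}{u}$. Using $\varphi(0) = H(0) = 0$ and the fundamental theorem of calculus, one obtains
\[
H(u) = \varphi(1) - \varphi(0) = \int_0^1 \dualp{h(tu)}{u}\, dt,
\]
which is the first assertion.

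For the evenness statement, I would substitute $-u$ for $u$ in the integral formula; if $h$ is odd, then $h(-tu) = -h(tu)$, so
\[
H(-u) = \int_0^1 \dualp{h(-tu)}{-u}\, dt = \int_0^1 \dualp{h(tu)}{u}\, dt = H(u).
\]
For the $(p-1)$-homogeneity part, since it suffices to have $h(tu) = t^{p-1}\, h(u)$ for $t \in [0,1]$ (which is immediate from the homogeneity assumption with $t \ge 0$), the integral formula yields
\[
H(u) = \int_0^1 t^{p-1} \dualp{h(u)}{u}\, dt = \frac{1}{p}\, \dualp{h(u)}{u}.
\]
The $p$-homogeneity of $H$ then follows by direct computation: for $t \in \R$,
\[
H(tu) = \frac{1}{p}\, \dualp{h(tu)}{tu} = \frac{1}{p}\, |t|^{p-2}\, t \cdot t\, \dualp{h(u)}{u} = |t|^p\, H(u),
\]
where oddness of $h$ (included in the hypotheses $(A_1),(B_1)$ wherever this proposition is invoked) is used to handle the sign, or alternatively one simply combines the already established evenness of $H$ with $p$-homogeneity for $t \ge 0$.

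There is no real obstacle here; the only care needed is in checking that $\varphi$ is indeed $C^1$ so that the fundamental theorem of calculus applies, which follows cleanly from $H \in C^1(W,\R)$ together with the chain rule for a composition of a $C^1$ map $W \to \R$ with the smooth curve $t \mapsto tu$.
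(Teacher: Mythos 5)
Your proof is correct and follows essentially the same route as the paper: apply the fundamental theorem of calculus to $t \mapsto H(tu)$, use $H' = h$ to compute the integrand, and then specialize to the homogeneous case by pulling out $t^{p-1}$. The extra remarks you include verifying the evenness and $p$-homogeneity claims are fine but are left implicit in the paper's proof.
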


\begin{proof}
We have
\[
H(u) = H(0) + \int_0^1 \frac{d}{dt}\, \big(H(tu)\big)\, dt = \int_0^1 \dualp{H'(tu)}{u} dt = \int_0^1 \dualp{h(tu)}{u} dt.
\]
The last integral equals
\[
\int_0^1 t^{p-1} \dualp{h(u)}{u} dt = \frac{1}{p} \dualp{h(u)}{u}
\]
if $h$ is $(p - 1)$-homogeneous.
\end{proof}

Next we establish two properties of the operator $\Ap$ that follows from the assumption $(A_2)$.

\begin{proposition} \label{Proposition 2}
If $(A_2)$ holds, then
\begin{enumroman}
\item \label{Proposition 2.i} $\Ap$ is strictly monotone: $\dualp{\Ap[u] - \Ap[v]}{u - v} > 0$ for all $u \ne v$ in $W$,
\item \label{Proposition 2.ii} $\Ap$ is of type $(S)$: every sequence $\seq{u_j} \subset W$ such that $u_j \wto u$ and $\dualp{\Ap[u_j]}{u_j - u} \to 0$ has a subsequence that converges strongly to $u$.
\end{enumroman}
\end{proposition}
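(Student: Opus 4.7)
The plan for part (i) is to expand the pairing using $(A_2)$:
\[
\dualp{\Ap[u] - \Ap[v]}{u - v} = \norm{u}^p + \norm{v}^p - \dualp{\Ap[u]}{v} - \dualp{\Ap[v]}{u},
\]
where the diagonal terms come from the ``in particular'' clause of $(A_2)$. Applying the inequality in $(A_2)$ to the two mixed terms (using the oddness of $\Ap$ to handle signs) produces the lower bound $(\norm{u}^{p-1} - \norm{v}^{p-1})(\norm{u} - \norm{v}) \ge 0$, since $t \mapsto t^{p-1}$ is nondecreasing on $[0,\infty)$. Strict positivity when $u \ne v$ will follow from a case analysis of the equality conditions: if $\dualp{\Ap[u] - \Ap[v]}{u - v} = 0$, then one reads off $\norm{u} = \norm{v}$ and, by the equality clause of $(A_2)$, there exist $\alpha, \beta \ge 0$ not both zero with $\alpha u = \beta v$. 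Ruling out $u = 0$ and $v = 0$ via $\norm{u} = \norm{v}$, then deducing $\alpha, \beta > 0$ and comparing norms, forces $u = v$, contradicting the hypothesis.

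For part (ii), the key idea is that $(A_2)$ controls $\norm{u_j}$ tightly enough to combine with the uniform convexity of $W$. Given $\seq{u_j} \subset W$ with $u_j \wto u$ and $\dualp{\Ap[u_j]}{u_j - u} \to 0$, I would first pass to a subsequence along which $\norm{u_j} \to \ell$ for some $\ell \ge 0$ (the sequence is bounded since it converges weakly). Rewriting the hypothesis as $\norm{u_j}^p - \dualp{\Ap[u_j]}{u} \to 0$ and bounding $\dualp{\Ap[u_j]}{u} \le \norm{u_j}^{p-1} \norm{u}$ via $(A_2)$ yields $\ell^p \le \ell^{p-1} \norm{u}$, hence $\ell \le \norm{u}$. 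Weak lower semicontinuity of the norm gives the reverse inequality, so $\norm{u_j} \to \norm{u}$ along the subsequence. The Radon--Riesz property of uniformly convex spaces then upgrades $u_j \wto u$ to $u_j \to u$ along the subsequence, and a standard subsequence-of-subsequence argument promotes this to convergence of the full sequence.

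The algebra in part (i) is essentially routine; the only subtle step is the bookkeeping around the equality case of $(A_2)$. Part (ii) is where the uniform convexity of $W$, assumed at the outset of Section \ref{Section 3}, is used in an essential way: without it the Radon--Riesz step would fail and one could not conclude strong convergence from weak convergence plus convergence of norms.
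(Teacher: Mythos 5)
Your part (i) matches the paper's proof: the same expansion of the pairing, the same application of $(A_2)$ to the two cross terms giving the lower bound $(\norm{u}^{p-1}-\norm{v}^{p-1})(\norm{u}-\norm{v}) \ge 0$, and the same case analysis on the equality conditions. The parenthetical remark about oddness is unnecessary (the cross-term bound $\dualp{\Ap[u]}{v}\le\norm{u}^{p-1}\norm{v}$ from $(A_2)$ needs no sign manipulation), but it does no harm.

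For part (ii) you take a mildly different route than the paper. The paper reuses the monotonicity inequality from (i): it shows $\dualp{\Ap[u_j]-\Ap[u]}{u_j-u}\to 0$ (using $u_j\wto u$ to kill the fixed term $\dualp{\Ap[u]}{u_j-u}$), which squeezes $(\norm{u_j}^{p-1}-\norm{u}^{p-1})(\norm{u_j}-\norm{u})\to 0$ and hence forces $\norm{u_j}\to\norm{u}$ directly. You instead estimate $\dualp{\Ap[u_j]}{u}\le\norm{u_j}^{p-1}\norm{u}$ to get $\limsup\norm{u_j}\le\norm{u}$, and then invoke weak lower semicontinuity of the norm for the reverse inequality. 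Both arrive at $\norm{u_j}\to\norm{u}$ and both then close with the Radon--Riesz (Kadec--Klee) property of uniformly convex spaces. The paper's version is slightly more unified in that it recycles the strict-monotonicity computation; yours is self-contained and arguably a touch more elementary since it does not need the two-sided monotonicity bound. One small remark: your argument in fact yields convergence of the full sequence (you only needed a subsequence, which is what the statement asks for), so the final subsequence-of-subsequence step is superfluous, though not wrong.
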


\begin{proof}
\ref{Proposition 2.i} By $(A_2)$,
\begin{multline*}
\dualp{\Ap[u] - \Ap[v]}{u - v} = \dualp{\Ap[u]}{u} - \dualp{\Ap[u]}{v} - \dualp{\Ap[v]}{u} + \dualp{\Ap[v]}{v}\\[10pt]
\ge \norm{u}^p - \norm{u}^{p-1} \norm{v} - \norm{v}^{p-1} \norm{u} + \norm{v}^p = \left(\norm{u}^{p-1} - \norm{v}^{p-1}\right)\! \big(\norm{u} - \norm{v}\big) \ge 0
\end{multline*}
for all $u, v \in W$. If $\dualp{\Ap[u] - \Ap[v]}{u - v} = 0$, then equality holds throughout and hence $\dualp{\Ap[u]}{v} = \norm{u}^{p-1} \norm{v}$ and $\norm{u} = \norm{v}$. The first equality implies that $\alpha u = \beta v$ for some $\alpha, \beta \ge 0$, not both zero. The second equality then implies that either $u = v = 0$, or $\alpha = \beta > 0$. In the latter case, $u = v$ since $\alpha u = \beta v$.

\ref{Proposition 2.ii} As in the proof of \ref{Proposition 2.i},
\begin{equation} \label{7}
\dualp{\Ap[u_j] - \Ap[u]}{u_j - u} \ge \left(\norm{u_j}^{p-1} - \norm{u}^{p-1}\right)\! \big(\norm{u_j} - \norm{u}\big) \ge 0.
\end{equation}
Since $\dualp{\Ap[u_j]}{u_j - u} \to 0$ and $u_j \wto u$,
\[
\dualp{\Ap[u_j] - \Ap[u]}{u_j - u} = \dualp{\Ap[u_j]}{u_j - u} - \dualp{\Ap[u]}{u_j - u} \to 0,
\]
so \eqref{7} implies that $\norm{u_j} \to \norm{u}$. Then $u_j \to u$ since $W$ is uniformly convex.
\end{proof}

Next we prove Theorem \ref{Theorem 4}.

\begin{proof}[Proof of Theorem \ref{Theorem 4}]
For each $w \in W$, the equation
\[
\Ap[u] = \Bp[w]
\]
has a unique solution $u \in W$. Indeed, a solution can be obtained by minimizing the functional
\[
\Phi(u) = \frac{1}{p} \dualp{\Ap[u]}{u} - \dualp{\Bp[w]}{u}, \quad u \in W,
\]
which is coercive and weakly lower semicontinuous since $\dualp{\Ap[u]}{u} = \norm{u}^p$ by $(A_2)$ and $p > 1$. The solution is unique since $\Ap$ is strictly monotone (see Proposition \ref{Proposition 2} \ref{Proposition 2.i}).

Since both $\Ap$ and $\Bp$ are $(p - 1)$-homogeneous and odd, the map
\[
K : W \to W, \quad w \mapsto u
\]
is linear. Moreover, $K$ is compact. To see this, let $\seq{w_j}$ be a bounded sequence in $W$ and let $u_j = Kw_j$. Since $\Bp$ is a compact operator,
\[
\Ap[u_j] = \Bp[w_j] \to l
\]
for a renamed subsequence and some $l \in W^\ast$. By $(A_2)$,
\[
\norm{u_j}^p = \dualp{\Ap[u_j]}{u_j} = \dualp{\Bp[w_j]}{u_j} \le \dnorm{\Bp[w_j]} \norm{u_j},
\]
which implies that $\seq{u_j}$ is bounded since $p > 1$ and $\seq{\Bp[w_j]}$ is bounded. Since $W$ is reflexive, then a further subsequence of $\seq{u_j}$ converges weakly to some $u \in W$. Then
\[
\dualp{\Ap[u_j]}{u_j - u} = \dualp{\Ap[u_j] - l}{u_j - u} + \dualp{l}{u_j - u} \to 0.
\]
Since $\Ap$ is of type $(S)$ (see Proposition \ref{Proposition 2} \ref{Proposition 2.ii}), then $\seq{u_j}$ has a subsequence that converges strongly to $u$.

Let $w \in W \setminus \set{0}$ and let $u = Kw$. Then $u \ne 0$ since $\dualp{\Ap[u]}{w} = \dualp{\Bp[w]}{w} > 0$ by $(B_2)$. Since $I_p$ is $p$-homogeneous, the radial projection of $u$ on $\M$ is given by
\[
\pi_\M(u) = \frac{u}{I_p(u)^{1/p}}.
\]
Since $J_p$ is $p$-homogeneous, this gives
\begin{equation} \label{9}
\Psi(\pi_\M(u)) = \frac{1}{J_p(\pi_\M(u))} = \frac{I_p(u)}{J_p(u)}.
\end{equation}
We have
\[
I_p(u) = \frac{1}{p} \dualp{\Ap[u]}{u} = \frac{1}{p} \dualp{\Bp[w]}{u} \le \frac{1}{p} \dualp{\Bp[w]}{w}^{(p-1)/p} \dualp{\Bp[u]}{u}^{1/p} = J_p(w)^{(p-1)/p}\, J_p(u)^{1/p}
\]
by $(B_2)$ and
\begin{equation} \label{8}
J_p(w) = \frac{1}{p} \dualp{\Bp[w]}{w} = \frac{1}{p} \dualp{\Ap[u]}{w} \le \frac{1}{p} \norm{u}^{p-1} \norm{w} = I_p(u)^{(p-1)/p}\, I_p(w)^{1/p}
\end{equation}
by $(A_2)$, so
\[
\frac{I_p(u)}{J_p(u)} \le \frac{I_p(w)}{J_p(w)}.
\]
For $w \in \M$, $I_p(w) = 1$ and $1/J_p(w) = \Psi(w)$, so combining this with \eqref{9} gives
\begin{equation} \label{10}
\Psi(\pi_\M(Kw)) \le \Psi(w).
\end{equation}

Let
\[
C_0 = \closure{\pi_\M(K(\Psi^{\lambda_k}))}.
\]
Since $\pi_\M(K(\Psi^{\lambda_k}))$ is contained in the closed set $\Psi^{\lambda_k}$ by \eqref{10}, we have
\begin{equation} \label{11}
\pi_\M(K(\Psi^{\lambda_k})) \subset C_0 \subset \Psi^{\lambda_k}.
\end{equation}
Since $\pi_\M \comp K$ is an odd continuous map on $\Psi^{\lambda_k}$, then
\[
i(\Psi^{\lambda_k}) \le i(\pi_\M(K(\Psi^{\lambda_k}))) \le i(C_0) \le i(\Psi^{\lambda_k})
\]
by the monotonicity of the index (see Proposition \ref{Proposition 3} $(i_2)$). Since $\lambda_k < \lambda_{k+1}$, this together with \eqref{6} gives $i(C_0) = k$. Since $I_p(w) = 1$ and $J_p(w) \ge 1/\lambda_k$ for $w \in \Psi^{\lambda_k}$, \eqref{8} implies that $\closure{K(\Psi^{\lambda_k})} \subset W \setminus \set{0}$. The set $\pi_\M(\closure{K(\Psi^{\lambda_k})})$ is compact since $\Psi^{\lambda_k}$ is bounded, $K$ is compact, and $\pi_\M$ is continuous on $W \setminus \set{0}$. Then so is the closed subset $C_0$.
\end{proof}

We are now ready to prove Theorem \ref{Theorem 2}.

\begin{proof}[Proof of Theorem \ref{Theorem 2}]
We apply Theorem \ref{Theorem 1} to the functional $E$ in \eqref{22}. If $0 < \lambda < \lambda_1$, take $A_0 = \emptyset$ and $B_0 = \M$. If $\lambda \ge \lambda_1$, then $\lambda_k \le \lambda < \lambda_{k+1}$ for some $k \ge 1$. Let $A_0$ be a compact symmetric subset of $\Psi^{\lambda_k}$ of index $k$ (see Theorem \ref{Theorem 4}) and let $B_0 = \Psi_{\lambda_{k+1}}$. Then
\[
i(A_0) = i(\M \setminus B_0) = k
\]
by \eqref{6}.

For $u \in \M$ and $t > 0$,
\begin{equation} \label{17}
E(tu) = t^p \left(1 - \frac{\lambda}{\Psi(u)}\right) - \mu F(tu) - G(tu).
\end{equation}
For $w \in B_0$, this together with $(G_1)$ gives
\[
E(tw) \ge t^p \left(1 - \frac{\lambda}{\lambda_{k+1}} + \o(1)\right) - \mu F(tw) \quad \text{as } t \to 0.
\]
Since $\lambda < \lambda_{k+1}$, it follows from this and $(F_3)$ that $\exists\, \rho, \mu_0 > 0$ such that
\[
\inf_B\, E > 0
\]
for all $0 < \mu < \mu_0$, where $B$ is as in \eqref{16}. Fix $0 < \mu < \mu_0$, let $w_0 \in \M \setminus A_0$, and let $A_1$ be as in \eqref{14}. Since $A_0$ is compact, so is $A_1$. For $u \in A_1$, \eqref{17} together with $(G_2)$ gives
\[
E(tu) \le t^p \left(1 - \mu\, \frac{F(tu)}{t^p}\right).
\]
Since $A_1$ is compact, it follows from this and $(F_1)$ that $\exists\, 0 < r < \rho$ such that
\begin{equation} \label{12}
\sup \set{E(ru) : u \in A_1} < 0.
\end{equation}
Similarly, \eqref{17} together with $(F_2)$ gives
\[
E(tu) \le t^p \left(1 - \frac{G(tu)}{t^p}\right)
\]
for $u \in A_1$, and it follows from this and $(G_4)$ that $\exists\, R > \rho$ such that
\begin{equation}
\sup \set{E(Ru) : u \in A_1} < 0.
\end{equation}
For $v \in A_0$,
\[
E(tv) < - t^p \left(\frac{\lambda}{\Psi(v)} - 1\right) \le 0
\]
since $\Psi(v) \le \lambda_k \le \lambda$. Since $A_0$ is compact, it follows from this that
\begin{equation} \label{13}
\sup \set{E(tv) : v \in A_0,\, r \le t \le R} < 0.
\end{equation}
Combining \eqref{12}--\eqref{13} gives
\[
\sup_A\, E < 0,
\]
where $A$ is as in \eqref{15}. So the second inequality in \eqref{2} holds. The first and the third inequalities hold for some $a < b$ since $A^\ast$ and $B^\ast$ are bounded sets and $E$ is bounded on bounded sets by $(A_2)$, $(B_3)$, $(F_3)$, and $(G_3)$. So $E$ has two nontrivial critical points $u_1, u_2$ with
\[
E(u_1) \le \sup_A\, E < 0 < \inf_B\, E \le E(u_2). \QED
\]
\end{proof}

Finally we prove Theorem \ref{Theorem 5}.

\begin{proof}[Proof of Theorem \ref{Theorem 5}]
We proceed as in the proof of Theorem \ref{Theorem 2}. If $0 < \lambda < \lambda_1$, take $A_0 = \emptyset$ and $B_0 = \M$. Then
\[
A^\ast \subset \set{tw_0 : t \ge 0}
\]
and hence $\sup E(A^\ast) < c_\mu$ for all sufficiently small $\mu > 0$ by \eqref{23}. If $\lambda_k \le \lambda < \lambda_{k+1}$, let $A_0 = C$ and $B_0 = \Psi_{\lambda_{k+1}}$. Then
\[
A^\ast \subset \set{sv + tw_0 : v \in C,\, s, t \ge 0}
\]
and hence $\sup E(A^\ast) < c_\mu$ for all sufficiently small $\mu > 0$ by \eqref{24}. So we can apply Theorem \ref{Theorem 1} with $a < \inf E(B^\ast)$ and $b = c_\mu$ to conclude the proof.
\end{proof}

\section{Proof of Theorem \ref{Theorem 6}} \label{Section 4}

In this section we prove Theorem \ref{Theorem 6} by applying Theorem \ref{Theorem 5} with $W = W^{1,\,p}_0(\Omega)$ and the operators $\Ap, \Bp, f, g \in C(W^{1,\,p}_0(\Omega),W^{-1,\,p'}(\Omega))$ given by
\begin{multline*}
\dualp{\Ap[u]}{v} = \int_\Omega |\nabla u|^{p-2}\, \nabla u \cdot \nabla v\, dx, \quad \dualp{\Bp[u]}{v} = \int_\Omega |u|^{p-2}\, uv\, dx,\\[5pt]
\dualp{f(u)}{v} = \int_\Omega f(x,u)\, v\, dx, \quad \dualp{g(u)}{v} = \int_\Omega |u|^{p^\ast - 2}\, uv\, dx, \quad u, v \in W^{1,\,p}_0(\Omega).
\end{multline*}
We begin by determining a threshold level below which the functional $E$ satisfies the \PS{} condition.

\begin{lemma} \label{Lemma 1}
Let $0 < \mu \le 1$. Then $\exists\, \kappa > 0$ such that $E$ satisfies the {\em \PS{c}} condition for all
\begin{equation} \label{52}
c < \frac{1}{N}\, S^{N/p} - \kappa \mu.
\end{equation}
\end{lemma}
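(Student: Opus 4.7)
The plan is to take a Palais--Smale sequence $(u_j)$ at a level $c$ satisfying \eqref{52}, prove boundedness in $W^{1,p}_0(\Omega)$, extract a weak limit $u$ which is itself a critical point of $E$, and then use a Brezis--Lieb decomposition of the residual $v_j := u_j - u$ together with the Sobolev inequality \eqref{28} to force a concentration dichotomy. The constant $\kappa$ will be chosen so that the high-energy branch of this dichotomy is incompatible with \eqref{52}.

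For boundedness, I would combine $E(u_j) \to c$ and $\dualp{E'(u_j)}{u_j} \to 0$ to cancel the $p$-Dirichlet and eigenvalue contributions, arriving at the identity
\[
\frac{p}{N} \int_\Omega |u_j|^{p^*} dx = p\, E(u_j) - \dualp{E'(u_j)}{u_j} + \mu \int_\Omega \bigl[f(x,u_j)\, u_j - p\, F(x,u_j)\bigr] dx.
\]
Using \eqref{27} to bound the last integrand by $C(|u_j|^r + 1)$ with $r < p^*$, then H\"older on $\Omega$ and Young's inequality with conjugate exponents $p^*/r$ and $p^*/(p^* - r)$, this absorbs into a fraction of $\int_\Omega |u_j|^{p^*} dx$, yielding a bound on $\int_\Omega |u_j|^{p^*} dx$. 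Substituting this bound back into the definition of $E(u_j)$ gives boundedness of $\norm{u_j}$ in $W^{1,p}_0(\Omega)$.

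Passing to a subsequence, $u_j \wto u$ weakly in $W^{1,p}_0(\Omega)$, $u_j \to u$ strongly in $L^s(\Omega)$ for every $s < p^*$, and $u_j \to u$ a.e. A standard Boccardo--Murat-type argument (based on the type $(S)$ property of $\Ap$ from Proposition \ref{Proposition 2} and the compactness of $\Bp$ and of the Nemytskii map $f$) gives $\nabla u_j \to \nabla u$ a.e.\ along a further subsequence, from which $E'(u) = 0$ upon testing against $\phi \in C_c^\infty(\Omega)$. The Brezis--Lieb lemma applied in $L^{p^*}$ and to the $p$-gradient norm then yields
\[
\norm{u_j}^p = \norm{u}^p + \norm{v_j}^p + o(1), \qquad \int_\Omega |u_j|^{p^*} dx = \int_\Omega |u|^{p^*} dx + \int_\Omega |v_j|^{p^*} dx + o(1),
\]
while the subcritical terms pass to the limit by the strong $L^s$-convergence. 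Subtracting $\dualp{E'(u)}{u} = 0$ from $\dualp{E'(u_j)}{u_j} \to 0$ gives $\norm{v_j}^p - \int_\Omega |v_j|^{p^*} dx \to 0$, and $E(u_j) \to c$ then yields $\tfrac{1}{N}\norm{v_j}^p \to c - E(u)$. The Sobolev inequality \eqref{28} forces either $\lim \norm{v_j}^p = 0$ or $\lim \norm{v_j}^p \ge S^{N/p}$; in the first case $u_j \to u$ strongly in $W^{1,p}_0(\Omega)$ and we are done.

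The main obstacle is ruling out the second alternative by a uniform lower bound $E(u) \ge -\kappa\mu$ valid at every critical point of $E$. Using $\dualp{E'(u)}{u} = 0$ to eliminate $\norm{u}^p - \lambda \int_\Omega |u|^p\, dx$ from $E(u)$ simplifies the energy to
\[
E(u) = \frac{1}{N} \int_\Omega |u|^{p^*} dx + \mu \int_\Omega \Bigl[\tfrac{1}{p}\, f(x,u)\, u - F(x,u)\Bigr] dx.
\]
By \eqref{27} and H\"older on the bounded domain, the $\mu$-term is at least $-C\mu\bigl(\bigl(\int_\Omega |u|^{p^*} dx\bigr)^{r/p^*} + \bigl(\int_\Omega |u|^{p^*} dx\bigr)^{1/p^*}\bigr)$. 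Two applications of Young's inequality, with conjugate exponents $(p^*/r,\, p^*/(p^*-r))$ and $(p^*,\, p^*/(p^*-1))$, absorb these powers into $\tfrac{1}{N}\int_\Omega |u|^{p^*} dx$, leaving residual constants of the form $C\mu^{p^*/(p^*-r)} + C\mu^{p^*/(p^*-1)}$. Since both exponents exceed $1$ and $0 < \mu \le 1$, both are dominated by $C\mu$, so $E(u) \ge -\kappa\mu$ for a suitable $\kappa > 0$. The second alternative then forces $c \ge E(u) + S^{N/p}/N \ge \tfrac{1}{N}\, S^{N/p} - \kappa\mu$, contradicting \eqref{52}.
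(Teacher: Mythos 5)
Your proposal is correct and follows essentially the same route as the paper: boundedness via the $pE(u_j)-\dualp{E'(u_j)}{u_j}$ computation, weak limit plus a.e.\ gradient convergence, Brezis--Lieb decomposition of $v_j=u_j-u$, the Sobolev dichotomy $\lim\norm{v_j}^p\in\set{0}\cup[S^{N/p},\infty)$, and absorption of the $\mu$-terms by Young's inequality (with $\mu\le 1$) to obtain the lower bound that rules out the high-energy branch under \eqref{52} --- your $E(u)=\frac1N\dint_\Omega|u|^{p^\ast}dx+\mu\dint_\Omega(\frac1p f u-F)\,dx\ge-\kappa\mu$ is precisely the paper's bound $K(x,t)\ge-a_2\mu$ integrated. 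One small caution: the a.e.\ convergence of $\nabla u_j$ that you correctly observe is needed (both for $E'(u)=0$ and for the gradient Brezis--Lieb identity, a point the paper passes over silently) does \emph{not} follow from the type $(S)$ property of $A_p$, which would yield strong convergence of $u_j$ and may fail here because $\dualp{g(u_j)}{u_j-u}\not\to 0$ in general; it is the Boccardo--Murat truncation argument itself, an independent mechanism, that supplies it. Also note the sign: the identity should read $\frac{p}{N}\dint_\Omega|u_j|^{p^\ast}dx = pE(u_j)-\dualp{E'(u_j)}{u_j}-\mu\dint_\Omega\bigl[f(x,u_j)u_j-pF(x,u_j)\bigr]dx$, harmless for your estimate but worth fixing.
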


\begin{proof}
Let $c \in \R$ and let $\seq{u_j}$ be a sequence in $W^{1,\,p}_0(\Omega)$ such that
\begin{equation} \label{42}
E(u_j) = \int_\Omega \left(\frac{1}{p}\, |\nabla u_j|^p - \frac{\lambda}{p}\, |u_j|^p - \mu F(x,u_j) - \frac{1}{p^\ast}\, |u_j|^{p^\ast}\right) dx = c + \o(1)
\end{equation}
and
\begin{multline} \label{43}
\dualp{E'(u_j)}{v} = \int_\Omega \left(|\nabla u_j|^{p-2}\, \nabla u_j \cdot \nabla v - \lambda\, |u_j|^{p-2}\, u_j\, v - \mu f(x,u_j)\, v - |u_j|^{p^\ast - 2}\, u_j\, v\right) dx\\[7.5pt]
= \o(\norm{v}) \quad \forall v \in W^{1,\,p}_0(\Omega).
\end{multline}
Taking $v = u_j$ in \eqref{43} gives
\begin{equation} \label{44}
\int_\Omega \left(|\nabla u_j|^p - \lambda\, |u_j|^p - \mu f(x,u_j)\, u_j - |u_j|^{p^\ast}\right) dx = \o(\norm{u_j}).
\end{equation}
Since $r < p^\ast$, \eqref{42} and \eqref{44} imply that $\seq{u_j}$ is bounded, so a renamed subsequence converges to some $u$ weakly in $W^{1,\,p}_0(\Omega)$, strongly in $L^s(\Omega)$ for all $s \in [1,p^\ast)$, and a.e.\! in $\Omega$. Setting $\widetilde{u}_j = u_j - u$, we will show that $\widetilde{u}_j \to 0$ in $W^{1,\,p}_0(\Omega)$.

Equation \eqref{44} implies
\begin{equation} \label{48}
\norm{u_j}^p = \pnorm[p^\ast]{u_j}^{p^\ast} + \int_\Omega \left(\lambda\, |u|^p + \mu f(x,u)\, u\right) dx + \o(1),
\end{equation}
where $\pnorm[p^\ast]{\cdot}$ denotes the $L^{p^\ast}(\Omega)$-norm. Taking $v = u$ in \eqref{43} and passing to the limit gives
\begin{equation} \label{46}
\norm{u}^p = \pnorm[p^\ast]{u}^{p^\ast} + \int_\Omega \left(\lambda\, |u|^p + \mu f(x,u)\, u\right) dx.
\end{equation}
Since
\begin{equation} \label{54}
\norm{\widetilde{u}_j}^p = \norm{u_j}^p - \norm{u}^p + \o(1)
\end{equation}
and
\[
\pnorm[p^\ast]{\widetilde{u}_j}^{p^\ast} = \pnorm[p^\ast]{u_j}^{p^\ast} - \pnorm[p^\ast]{u}^{p^\ast} + \o(1)
\]
by the Br{\'e}zis-Lieb lemma \cite[Theorem 1]{MR699419}, \eqref{48} and \eqref{46} imply
\[
\norm{\widetilde{u}_j}^p = \pnorm[p^\ast]{\widetilde{u}_j}^{p^\ast} + \o(1) \le \frac{\norm{\widetilde{u}_j}^{p^\ast}}{S^{p^\ast/p}} + \o(1),
\]
so
\begin{equation} \label{50}
\norm{\widetilde{u}_j}^p \left(S^{N/(N-p)} - \norm{\widetilde{u}_j}^{p^2/(N-p)}\right) \le \o(1).
\end{equation}
On the other hand, \eqref{42} implies
\[
c = \frac{1}{p} \norm{u_j}^p - \frac{1}{p^\ast} \pnorm[p^\ast]{u_j}^{p^\ast} - \int_\Omega \left(\frac{\lambda}{p}\, |u|^p + \mu F(x,u)\right) dx + \o(1),
\]
and a straightforward calculation combining this with \eqref{48}--\eqref{54} gives
\begin{equation} \label{53}
c = \frac{1}{N} \norm{\widetilde{u}_j}^p + \int_\Omega K(x,u)\, dx + \o(1),
\end{equation}
where
\[
K(x,t) = \frac{1}{N}\, |t|^{p^\ast} + \mu \left(\frac{1}{p}\, f(x,t)\, t - F(x,t)\right).
\]
By \eqref{27},
\[
\frac{1}{p}\, f(x,t)\, t - F(x,t) \ge - a_1\, (|t|^r + 1) \quad \text{for a.a.\! } x \in \Omega \text{ and all } t \in \R
\]
for some $a_1 > 0$. Since $r < p^\ast$ and $\mu \le 1$, this together with the Young's inequality gives
\[
K(x,t) \ge - a_2\, \mu \quad \text{for a.a.\! } x \in \Omega \text{ and all } t \in \R
\]
for some $a_2 > 0$. Then \eqref{53} gives
\[
\norm{\widetilde{u}_j}^p \le N(c + \kappa \mu) + \o(1)
\]
for some $\kappa > 0$. Combining this with \eqref{50} shows that $\widetilde{u}_j \to 0$ when \eqref{52} holds.
\end{proof}

We will apply Theorem \ref{Theorem 5} with
\[
c_\mu = \frac{1}{N}\, S^{N/p} - \kappa \mu,
\]
where $\kappa > 0$ is as in Lemma \ref{Lemma 1}. We only consider the case where $\lambda_k < \lambda < \lambda_{k+1}$ for some $k \ge 1$ since the case where $0 < \lambda < \lambda_1$ is similar and simpler. We have $\M = \set{u \in W^{1,\,p}_0(\Omega) : \norm{u}^p = p}$ and $\Psi(u) = p/\int_\Omega |u|^p\, dx$ for $u \in \M$. We need to show that there exist a compact symmetric subset $C$ of $\Psi^\lambda$ with $i(C) = k$ and $w_0 \in \M \setminus C$ such that
\begin{equation} \label{29}
\sup_{v \in C,\, s, t \ge 0}\, E(sv + tw_0) < \frac{1}{N}\, S^{N/p} - \kappa \mu
\end{equation}
for all sufficiently small $\mu > 0$.

By iterating a sufficient number of times the map $K$ used in the proof of Theorem \ref{Theorem 4}, we may assume that the compact symmetric subset $C_0$ of $\Psi^{\lambda_k}$ with $i(C_0) = k$ constructed in that theorem is bounded in $L^\infty(\Omega) \cap C^{1,\,\alpha}_\loc(\Omega)$ (see Degiovanni and Lancelotti \cite[Theorem 2.3]{MR2514055}). We may assume without loss of generality that $0 \in \Omega$. Let $\delta_0 = \dist{0}{\bdry{\Omega}}$, let $\eta : [0,\infty) \to [0,1]$ be a smooth function such that $\eta(s) = 0$ for $s \le 3/4$ and $\eta(s) = 1$ for $s \ge 1$, and set
\[
u_\delta(x) = \eta\bigg(\frac{|x|}{\delta}\bigg)\, u(x), \quad 0 < \delta \le \delta_0/2
\]
for $u \in C_0$. Then set
\[
v = \pi_\M(u_\delta),
\]
where $\pi_\M : W^{1,\,p}_0(\Omega) \setminus \set{0} \to \M,\, u \mapsto p^{1/p}\, u/\norm{u}$ is the radial projection onto $\M$, and let
\[
C = \set{v : u \in C_0}.
\]

\begin{lemma} \label{Lemma 2}
If $\delta > 0$ is sufficiently small, then $C$ is a compact symmetric subset of $\Psi^\lambda$ with $i(C) = k$.
\end{lemma}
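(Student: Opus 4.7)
The plan is to verify in turn that $C$ is symmetric, compact, contained in $\Psi^\lambda$, and has cohomological index $k$. The first three properties will all rest on a single analytic fact, which I would establish first: the truncation map $\Phi_\delta : C_0 \to W^{1,p}_0(\Omega)$, $u \mapsto u_\delta$, converges to the identity on $C_0$ in the $W^{1,p}_0(\Omega)$ norm, uniformly as $\delta \searrow 0$.

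To prove this, I would write $u - u_\delta = (1 - \eta(|x|/\delta))\, u$, which is supported in $\overline{B_\delta(0)}$, so that
\[
\nabla(u - u_\delta) = - \frac{1}{\delta}\, \eta'(|x|/\delta)\, \frac{x}{|x|}\, u + (1 - \eta(|x|/\delta))\, \nabla u
\]
is supported there as well. Since $C_0$ is bounded in $L^\infty(\Omega) \cap C^{1,\alpha}_\loc(\Omega)$ and $\overline{B_{\delta_0/2}(0)} \subset \Omega$, there is $M > 0$ with $|u| + |\nabla u| \le M$ on $\overline{B_{\delta_0/2}(0)}$ for every $u \in C_0$, so
\[
\norm{u - u_\delta}^p \le C\, \left(M^p \delta^{-p} + M^p\right) |B_\delta(0)| = O(\delta^{N-p}),
\]
which tends to $0$ uniformly on $C_0$ because the hypothesis $N \ge p^2$ forces $N > p$. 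The corresponding $L^p$ estimate is immediate from $|u - u_\delta| \le |u|$ on $B_\delta(0)$.

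From this uniform convergence the first three assertions follow at once. Since $\norm{u} = p^{1/p}$ and $\int_\Omega |u|^p\, dx$ is bounded below on the compact set $C_0 \subset \M$, uniformity gives $\norm{u_\delta} \to p^{1/p}$ and $\int_\Omega |u_\delta|^p\, dx \to \int_\Omega |u|^p\, dx$ uniformly; hence both are bounded away from $0$ on $C_0$ for all sufficiently small $\delta$. The composition $v := \pi_\M \comp \Phi_\delta : C_0 \to \M$ is then a well-defined odd continuous map, so $C = v(C_0)$ is compact and symmetric. Furthermore,
\[
\Psi(v(u)) = \frac{\norm{u_\delta}^p}{\dint_\Omega |u_\delta|^p\, dx} \longrightarrow \Psi(u) \le \lambda_k
\]
uniformly on $C_0$, and since $\lambda_k < \lambda$, fixing $\delta > 0$ small enough yields $C \subset \Psi^\lambda$.

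For the index I would use monotonicity twice. The odd continuous surjection $v : C_0 \to C$ together with Proposition \ref{Proposition 3} $(i_2)$ gives $i(C) \ge i(C_0) = k$. Conversely, $\lambda < \lambda_{k+1}$ implies $C \subset \Psi^\lambda \subset \M \setminus \Psi_{\lambda_{k+1}}$, and the right-hand side has index exactly $k$ by \eqref{6}, so $i(C) \le k$. Hence $i(C) = k$. The only real obstacle is the uniform $W^{1,p}_0$ estimate above: the factor $1/\delta$ from $\nabla \eta(|x|/\delta)$ must be absorbed by the volume $|B_\delta(0)| = O(\delta^N)$, which is where both the pointwise $L^\infty$ bound on $C_0$ and the condition $N > p$ are essential, while the term $(1-\eta)\,\nabla u$ is controlled by the $C^{1,\alpha}_\loc$ bound near the origin; the rest is a bookkeeping consequence of the monotonicity of the cohomological index and formula \eqref{6}.
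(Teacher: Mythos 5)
Your proposal is correct and follows essentially the same route as the paper: the key estimate $\norm{u - u_\delta}^p = O(\delta^{N-p})$ obtained from the $L^\infty \cap C^{1,\alpha}_{\loc}$ bound on $C_0$ together with $|B_\delta(0)| = O(\delta^N)$, the consequent upper bound $\Psi(v) \le \lambda_k + O(\delta^{N-p}) < \lambda$, and the two-sided index estimate via monotonicity and \eqref{6} are all exactly the paper's argument, merely packaged by first establishing uniform $W^{1,p}$ convergence of the truncation rather than estimating the numerator and denominator of $\Psi$ separately.
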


\begin{proof}
First we show that $C \subset \Psi^\lambda$ if $\delta > 0$ is sufficiently small. Let $u \in C_0$. Since functions in $C_0$ are bounded in $C^1(B_{\delta_0/2}(0))$ and belong to $\Psi^{\lambda_k}$,
\[
\int_\Omega |\nabla u_\delta|^p\, dx \le \int_{\Omega \setminus B_\delta(0)} |\nabla u|^p\, dx + \int_{B_\delta(0)} \left(|\nabla u|^p + a_1\, \delta^{-p}\, |u|^p\right) dx \le p + a_2\, \delta^{N-p}
\]
for some $a_1, a_2 > 0$ and
\[
\int_\Omega |u_\delta|^p\, dx \ge \int_{\Omega \setminus B_\delta(0)} |u|^p\, dx = \int_\Omega |u|^p\, dx - \int_{B_\delta(0)} |u|^p\, dx \ge \frac{p}{\lambda_k} - a_3\, \delta^N
\]
for some $a_3 > 0$. So
\[
\Psi(v) = \frac{\dint_\Omega |\nabla u_\delta|^p\, dx}{\dint_\Omega |u_\delta|^p\, dx} \le \lambda_k + a_4\, \delta^{N-p}
\]
for some $a_4 > 0$. Since $\lambda_k < \lambda$, the last expression is less than or equal to $\lambda$, and hence $v \in \Psi^\lambda$, for all sufficiently small $\delta > 0$.

Since $C_0$ is a compact symmetric set and $u \mapsto v$ is an odd continuous map of $C_0$ onto $C$, $C$ is also a compact symmetric set and
\[
i(C) \ge i(C_0) = k
\]
by the monotonicity of the index (see Proposition \ref{Proposition 3} $(i_2)$). On the other hand, since $C \subset \Psi^\lambda \subset \M \setminus \Psi_{\lambda_{k+1}}$,
\[
i(C) \le i(\M \setminus \Psi_{\lambda_{k+1}}) = k
\]
by \eqref{6}. So $i(C) = k$.
\end{proof}

We are now ready to prove Theorem \ref{Theorem 6}.

\begin{proof}[Proof of Theorem \ref{Theorem 6}]
Recall that the infimum in \eqref{28} is attained by the Aubin-Talenti functions
\[
u^\ast_\eps(x) = \frac{c_{N,p}\, \eps^{(N-p)/p^2}}{\big(\eps + |x|^{p/(p-1)}\big)^{(N-p)/p}}, \quad \eps > 0
\]
when $\Omega = \R^N$, where the constant $c_{N,p} > 0$ is chosen so that
\[
\int_{\R^N} |\nabla u^\ast_\eps|^p\, dx = \int_{\R^N} |u^\ast_\eps|^{p^\ast} dx = S^{N/p}.
\]
Fix $\delta > 0$ so small that $C$ is a compact symmetric subset of $\Psi^\lambda$ with $i(C) = k$ (see Lemma \ref{Lemma 2}), let $\theta : [0,\infty) \to [0,1]$ be a smooth function such that $\theta(s) = 1$ for $s \le 1/4$ and $\theta(s) = 0$ for $s \ge 1/2$, and set
\[
u_\eps(x) = \theta\bigg(\frac{|x|}{\delta}\bigg)\, u^\ast_\eps(x), \quad \widetilde{u}_\eps(x) = \frac{u_\eps(x)}{\left(\dint_{\R^N} |u_\eps|^{p^\ast} dx\right)^{1/p^\ast}}, \quad \eps > 0.
\]
We have the well-known estimates
\begin{gather}
\label{31} \int_{\R^N} |\nabla \widetilde{u}_\eps|^p\, dx \le S + a_1\, \eps^{(N-p)/p},\\[10pt]
\int_{\R^N} |\widetilde{u}_\eps|^{p^\ast} dx = 1,\\[10pt]
\label{32} \int_{\R^N} |\widetilde{u}_\eps|^p\, dx \ge \begin{cases}
a_2\, \eps^{p-1} & \text{if } N > p^2\\[7.5pt]
a_2\, \eps^{p-1} \abs{\log \eps} & \text{if } N = p^2
\end{cases}
\end{gather}
for some $a_1, a_2 > 0$ (see, e.g., Dr{\'a}bek and Huang \cite{MR1473856}). Let
\[
w_0 = \pi_\M(\widetilde{u}_\eps).
\]
Since functions in $C$ have their supports in $\Omega \setminus B_{3 \delta/4}(0)$, while the support of $w_0$ is in $\closure{B_{\delta/2}(0)}$, $w_0 \in \M \setminus C$. We will show that \eqref{29} holds for all sufficiently small $\eps, \mu > 0$.

Let $v \in C$ and $s, t \ge 0$. Since $v$ and $w_0$ have disjoint supports,
\begin{equation} \label{33}
E(sv + tw_0) = E(sv) + E(tw_0).
\end{equation}
By \eqref{30},
\begin{equation}
E(sv) \le \frac{s^p}{p} \left(\int_\Omega |\nabla v|^p\, dx - \lambda \int_\Omega |v|^p\, dx\right) = - s^p \left(\frac{\lambda}{\Psi(v)} - 1\right) \le 0
\end{equation}
since $v \in \Psi^\lambda$. Moreover,
\[
E(tw_0) \le \frac{t^p}{p} \left(\int_\Omega |\nabla w_0|^p\, dx - \lambda \int_\Omega |w_0|^p\, dx\right) - \frac{t^{p^\ast}}{p^\ast} \int_\Omega |w_0|^{p^\ast} dx,
\]
and maximizing the right-hand side over all $t \ge 0$ gives
\begin{eqnarray} \label{34}
E(tw_0) & \le & \frac{1}{N} \frac{\left(\dint_\Omega |\nabla w_0|^p\, dx - \lambda \dint_\Omega |w_0|^p\, dx\right)^{p^\ast/(p^\ast - p)}}{\left(\dint_\Omega |w_0|^{p^\ast} dx\right)^{p/(p^\ast - p)}} \notag\\[5pt]
& = & \frac{1}{N} \frac{\left(\dint_\Omega |\nabla \widetilde{u}_\eps|^p\, dx - \lambda \dint_\Omega |\widetilde{u}_\eps|^p\, dx\right)^{p^\ast/(p^\ast - p)}}{\left(\dint_\Omega |\widetilde{u}_\eps|^{p^\ast} dx\right)^{p/(p^\ast - p)}} \notag\\[7.5pt]
& \le & \frac{1}{N} \begin{cases}
\left(S + a_1\, \eps^{(N-p)/p} - \lambda a_2\, \eps^{p-1}\right)^{N/p} & \text{if } N > p^2\\[7.5pt]
\left(S + a_1\, \eps^{p-1} - \lambda a_2\, \eps^{p-1} \abs{\log \eps}\right)^p & \text{if } N = p^2
\end{cases}
\end{eqnarray}
by \eqref{31}--\eqref{32}. It follows from \eqref{33}--\eqref{34} that
\[
\sup_{v \in C,\, s, t \ge 0}\, E(sv + tw_0) < \frac{1}{N}\, S^{N/p}
\]
if $\eps > 0$ is sufficiently small. Then \eqref{29} holds if, in addition, $\mu > 0$ is sufficiently small.
\end{proof}

\section{Proof of Theorem \ref{Theorem 7}} \label{Section 5}

In this section we prove Theorem \ref{Theorem 7} by applying Theorem \ref{Theorem 5} with $W = W^{s,\,p}_0(\Omega)$ and the operators $\Ap$, $\Bp$, $f$, and $g$ on $W^{s,\,p}_0(\Omega)$ given by
\begin{multline*}
\dualp{\Ap[u]}{v} = \int_{\R^{2N}} \frac{|u(x) - u(y)|^{p-2}\, (u(x) - u(y))\, (v(x) - v(y))}{|x - y|^{N+sp}}\, dx dy,\\[5pt]
\dualp{\Bp[u]}{v} = \int_\Omega |u|^{p-2}\, uv\, dx, \quad \dualp{f(u)}{v} = \int_\Omega f(x,u)\, v\, dx, \quad \dualp{g(u)}{v} = \int_\Omega |u|^{p^\ast - 2}\, uv\, dx,\\[7.5pt]
u, v \in W^{s,\,p}_0(\Omega).
\end{multline*}
The proof is similar to that of Theorem \ref{Theorem 6}, so we will be sketchy.

\begin{proof}[Proof of Theorem \ref{Theorem 7}]
An argument similar to that in the proof of Lemma \ref{Lemma 1} shows that if $0 < \mu \le 1$, then $\exists\, \kappa > 0$ such that $E$ satisfies the \PS{c} condition for all
\[
c < \frac{s}{N}\, S^{N/sp} - \kappa \mu.
\]
We will apply Theorem \ref{Theorem 5} with
\[
c_\mu = \frac{s}{N}\, S^{N/sp} - \kappa \mu.
\]
We only consider the case where $\lambda_k < \lambda < \lambda_{k+1}$ for some $k \ge 1$ since the case where $0 < \lambda < \lambda_1$ is similar and simpler. We have $\M = \set{u \in W^{s,\,p}_0(\Omega) : \norm{u}^p = p}$ and $\Psi(u) = p/\int_\Omega |u|^p\, dx$ for $u \in \M$. We need to show that there exist a compact symmetric subset $C$ of $\Psi^\lambda$ with $i(C) = k$ and $w_0 \in \M \setminus C$ such that
\begin{equation} \label{41}
\sup_{v \in C,\, s, t \ge 0}\, E(sv + tw_0) < \frac{s}{N}\, S^{N/sp} - \kappa \mu
\end{equation}
for all sufficiently small $\mu > 0$.

In the absence of an explicit formula for a minimizer for $S$ in \eqref{36}, we will use certain asymptotic estimates for minimizers obtained in Brasco et al.\! \cite{MR3461371}. It was shown there that there exists a nonnegative, radially symmetric, and decreasing minimizer $U = U(r)$ satisfying
\begin{equation} \label{38}
\int_{\R^{2N}} \frac{|U(x) - U(y)|^p}{|x - y|^{N+sp}}\, dx dy = \int_{\R^N} |U|^{p_s^\ast}\, dx = S^{N/sp}
\end{equation}
and
\begin{equation} \label{37}
c_1 r^{-(N-sp)/(p-1)} \le U(r) \le c_2 r^{-(N-sp)/(p-1)} \quad \forall r \ge 1
\end{equation}
for some constants $c_1, c_2 > 0$. By \eqref{37},
\[
\frac{U(\theta r)}{U(r)} \le \frac{c_2}{c_1}\, \theta^{-(N-sp)/(p-1)} \le \half \quad \forall r \ge 1
\]
if $\theta > 1$ is a sufficiently large constant. The function
\[
U_\eps(x) = \eps^{-(N-sp)/p}\, U\bigg(\frac{|x|}{\eps}\bigg)
\]
is also a minimizer for $S$ satisfying \eqref{38} for any $\eps > 0$. For $\eps, \delta > 0$, let
\[
m_{\eps,\delta} = \frac{U_\eps(\delta)}{U_\eps(\delta) - U_\eps(\theta \delta)},
\]
let
\[
g_{\eps,\delta}(t) = \begin{cases}
0 & \text{if } 0 \le t \le U_\eps(\theta \delta)\\[7.5pt]
m_{\eps,\delta}^p\, (t - U_\eps(\theta \delta)) & \text{if } U_\eps(\theta \delta) \le t \le U_\eps(\delta)\\[7.5pt]
t + U_\eps(\delta)\, (m_{\eps,\delta}^{p-1} - 1) & \text{if } t \ge U_\eps(\delta),
\end{cases}
\]
let
\[
G_{\eps,\delta}(t) = \int_0^t g_{\eps,\delta}'(\tau)^{1/p}\, d\tau = \begin{cases}
0 & \text{if } 0 \le t \le U_\eps(\theta \delta)\\[7.5pt]
m_{\eps,\delta}\, (t - U_\eps(\theta \delta)) & \text{if } U_\eps(\theta \delta) \le t \le U_\eps(\delta)\\[7.5pt]
t & \text{if } t \ge U_\eps(\delta),
\end{cases}
\]
and set
\[
u_{\eps,\delta}(r) = G_{\eps,\delta}(U_\eps(r)).
\]
For $\eps \le \delta/2$, we have the estimates
\begin{gather}
\label{39} \int_{\R^{2N}} \frac{|u_{\eps,\delta}(x) - u_{\eps,\delta}(y)|^p}{|x - y|^{N+sp}}\, dx dy \le S^{N/sp} + a_1 \Big(\frac{\eps}{\delta}\Big)^{(N-sp)/(p-1)},\\[10pt]
\int_{\R^N} |u_{\eps,\delta}|^{p_s^\ast}\, dx \ge S^{N/sp} - a_1 \Big(\frac{\eps}{\delta}\Big)^{N/(p-1)},\\[10pt]
\label{40} \int_{\R^N} |u_{\eps,\delta}|^p\, dx \ge \begin{cases}
a_2\, \eps^{sp} & \text{if } N > sp^2\\[7.5pt]
a_2\, \eps^{sp} \abs{\log \Big(\dfrac{\eps}{\delta}\Big)} & \text{if } N = sp^2
\end{cases}
\end{gather}
for some $a_1, a_2 > 0$ (see Mosconi et al.\! \cite[Lemma 2.7]{MR3530213}). Let
\[
w_0 = \pi_\M(u_{\eps,\delta}),
\]
where $\pi_\M : W^{s,\,p}_0(\Omega) \setminus \set{0} \to \M,\, u \mapsto p^{1/p}\, u/\norm{u}$ is the radial projection onto $\M$.

By iterating a sufficient number of times the map $K$ used in the proof of Theorem \ref{Theorem 4}, we may assume that the compact symmetric subset $C_0$ of $\Psi^{\lambda_k}$ with $i(C_0) = k$ constructed in that theorem consists of functions $u$ such that $(- \Delta)_p^s\, u$ is bounded in $L^\infty(\Omega)$ (see Mosconi et al.\! \cite[Proposition 3.1]{MR3530213}). We may assume without loss of generality that $0 \in \Omega$. Let $\eta : \R^N \to [0,1]$ be a smooth function such that $\eta(x) = 0$ for $|x| \le 2 \theta$ and $\eta(x) = 1$ for $|x| \ge 3 \theta$, and set
\[
u_\delta(x) = \eta\Big(\frac{x}{\delta}\Big)\, u(x), \quad \delta > 0
\]
for $u \in C_0$. Then set
\[
v = \pi_\M(u_\delta)
\]
and let
\[
C = \set{v : u \in C_0}.
\]
If $\delta > 0$ is sufficiently small, then $C$ is a compact symmetric subset of $\Psi^\lambda$ with $i(C) = k$ (see Mosconi et al.\! \cite[Proposition 3.2]{MR3530213}). Since functions in $C$ have their supports in $\Omega \setminus B_{2 \theta \delta}(0)$, while the support of $w_0$ is in $\closure{B_{\theta \delta}(0)}$, $w_0 \in \M \setminus C$. An argument similar to that in the proof of Theorem \ref{Theorem 6} shows that
\[
\sup_{v \in C,\, s, t \ge 0}\, E(sv + tw_0) < \frac{s}{N}\, S^{N/sp}
\]
if $\eps > 0$ is sufficiently small (see the proof of Theorem 1.3 in Mosconi et al.\! \cite{MR3530213}). Then \eqref{41} holds if, in addition, $\mu > 0$ is sufficiently small. \end{proof}

\def\cdprime{$''$}


\begin{thebibliography}{10}

\bibitem{MR1276168}
Antonio Ambrosetti, Ha\"\i~m Brezis, and Giovanna Cerami.
\newblock Combined effects of concave and convex nonlinearities in some
  elliptic problems.
\newblock {\em J. Funct. Anal.}, 122(2):519--543, 1994.

\bibitem{MR1741848}
Gianni Arioli and Filippo Gazzola.
\newblock Some results on {$p$}-{L}aplace equations with a critical growth
  term.
\newblock {\em Differential Integral Equations}, 11(2):311--326, 1998.

\bibitem{MR3461371}
Lorenzo Brasco, Sunra Mosconi, and Marco Squassina.
\newblock Optimal decay of extremals for the fractional {S}obolev inequality.
\newblock {\em Calc. Var. Partial Differential Equations}, 55(2):Art. 23, 32,
  2016.

\bibitem{MR699419}
Ha{\"{\i}}m Br{\'e}zis and Elliott Lieb.
\newblock A relation between pointwise convergence of functions and convergence
  of functionals.
\newblock {\em Proc. Amer. Math. Soc.}, 88(3):486--490, 1983.

\bibitem{MR2514055}
Marco Degiovanni and Sergio Lancelotti.
\newblock Linking solutions for {$p$}-{L}aplace equations with nonlinearity at
  critical growth.
\newblock {\em J. Funct. Anal.}, 256(11):3643--3659, 2009.

\bibitem{MR1473856}
Pavel Dr{\'a}bek and Yin~Xi Huang.
\newblock Multiplicity of positive solutions for some quasilinear elliptic
  equation in {${\bf R}\sp N$} with critical {S}obolev exponent.
\newblock {\em J. Differential Equations}, 140(1):106--132, 1997.

\bibitem{MR956567}
Henrik Egnell.
\newblock Existence and nonexistence results for {$m$}-{L}aplace equations
  involving critical {S}obolev exponents.
\newblock {\em Arch. Rational Mech. Anal.}, 104(1):57--77, 1988.

\bibitem{MR0478189}
Edward~R. Fadell and Paul~H. Rabinowitz.
\newblock Generalized cohomological index theories for {L}ie group actions with
  an application to bifurcation questions for {H}amiltonian systems.
\newblock {\em Invent. Math.}, 45(2):139--174, 1978.

\bibitem{MR3518343}
Marcelo~F. Furtado, Jo\~{a}o Pablo~P. da~Silva, and Bruno~N. Souza.
\newblock Elliptic equations with weight and combined nonlinearities.
\newblock {\em Adv. Nonlinear Stud.}, 16(3):509--517, 2016.

\bibitem{MR912211}
J.~P. Garc{\'{\i}}a~Azorero and I.~Peral~Alonso.
\newblock Existence and nonuniqueness for the {$p$}-{L}aplacian: nonlinear
  eigenvalues.
\newblock {\em Comm. Partial Differential Equations}, 12(12):1389--1430, 1987.

\bibitem{MR1776988}
J.~P. Garc{\'{\i}}a~Azorero, I.~Peral~Alonso, and Juan~J. Manfredi.
\newblock Sobolev versus {H}\"older local minimizers and global multiplicity
  for some quasilinear elliptic equations.
\newblock {\em Commun. Contemp. Math.}, 2(3):385--404, 2000.

\bibitem{MR1009077}
Mohammed Guedda and Laurent V{\'e}ron.
\newblock Quasilinear elliptic equations involving critical {S}obolev
  exponents.
\newblock {\em Nonlinear Anal.}, 13(8):879--902, 1989.

\bibitem{MR2476670}
Sophia~Th. Kyritsi and Nikolaos~S. Papageorgiou.
\newblock Pairs of positive solutions for {$p$}-{L}aplacian equations with
  combined nonlinearities.
\newblock {\em Commun. Pure Appl. Anal.}, 8(3):1031--1051, 2009.

\bibitem{MR3530213}
Sunra Mosconi, Kanishka Perera, Marco Squassina, and Yang Yang.
\newblock The {B}rezis-{N}irenberg problem for the fractional
  {$p$}-{L}aplacian.
\newblock {\em Calc. Var. Partial Differential Equations}, 55(4):Art. 105, 25,
  2016.

\bibitem{MR3367938}
Nikolaos~S. Papageorgiou and Vicen\c{t}iu~D. R\u{a}dulescu.
\newblock Combined effects of concave-convex nonlinearities and indefinite
  potential in some elliptic problems.
\newblock {\em Asymptot. Anal.}, 93(3):259--279, 2015.

\bibitem{MR1998432}
Kanishka Perera.
\newblock Nontrivial critical groups in {$p$}-{L}aplacian problems via the
  {Y}ang index.
\newblock {\em Topol. Methods Nonlinear Anal.}, 21(2):301--309, 2003.

\bibitem{MR2640827}
Kanishka Perera, Ravi~P. Agarwal, and Donal O'Regan.
\newblock {\em Morse theoretic aspects of {$p$}-{L}aplacian type operators},
  volume 161 of {\em Mathematical Surveys and Monographs}.
\newblock American Mathematical Society, Providence, RI, 2010.

\end{thebibliography}
\end{document}